\definecolor{ffqqqq}{rgb}{1.,0.,0.}
\definecolor{qqzzff}{rgb}{0.,0.6,1.}
\def\@tocline#1#2#3#4#5#6#7{\relax
  \ifnum #1>\c@tocdepth 
  \else
    \par \addpenalty\@secpenalty\addvspace{#2}%
    \begingroup \hyphenpenalty\@M
    \@ifempty{#4}{%
      \@tempdima\csname r@tocindent\number#1\endcsname\relax
    }{%
      \@tempdima#4\relax
    }%
    \parindent\z@ \leftskip#3\relax
    \advance\leftskip\@tempdima\relax
    \rightskip\@pnumwidth plus4em \parfillskip-\@pnumwidth
    #5\leavevmode\hskip-\@tempdima
      \ifcase #1
       \or\or \hskip 2em \or \hskip 2em \else \hskip 3em \fi%
      #6\nobreak\relax
    \dotfill\hbox to\@pnumwidth{\@tocpagenum{#7}}\par
    \nobreak
    \endgroup
  \fi}
\newtheorem{theorem}{Theorem}[section]
\newtheorem{lemma}[theorem]{Lemma}
\newtheorem{corollary}[theorem]{Corollary}
\newtheorem{remark}[theorem]{Remark}
\crefname{section}{Sect.}{section}
\numberwithin{equation}{section}
\DeclareMathOperator{\ad}{Ad}
\DeclareMathOperator{\im}{Im}
\DeclareMathOperator{\re}{Re}
\DeclareMathOperator{\conv}{CH}
\DeclareMathOperator{\supp}{supp}
\DeclareMathOperator{\hess}{Hess}
\begin{document}
\title[Wave equation on certain noncompact Riemannian symmetric spaces]{
Wave equation on certain \\ noncompact symmetric spaces}

\author{Hong-Wei ZHANG}

\begin{abstract}
In this paper, we prove sharp pointwise kernel estimates 
and dispersive properties for the linear wave equation 
on noncompact Riemannian symmetric spaces $G/K$ of any rank with $G$ complex. 
As a consequence, we deduce Strichartz inequalities for a large family of admissible pairs 
and prove global well-posedness results for the corresponding semilinear equation
with low regularity data as on hyperbolic spaces.
\end{abstract}

\keywords{noncompact symmetric space of higher rank, semilinear wave equation, 
dispersive property, Strichartz inequality, global well-posedness.}

\makeatletter
\@namedef{subjclassname@2020}{\textnormal{2020}
    \it{Mathematics Subject Classification}}
\makeatother
\subjclass[2020]{22E30, 35J10, 35P25, 43A85, 43A90}

\maketitle
\tableofcontents

\section{Introduction}
This paper is devoted to study the dispersive properties of the linear wave equation on noncompact symmetric space $G/K$ of any rank with $G$ complex, and their applications to nonlinear Cauchy problems. This theory is well established for the wave equation on $\mathbb{R}^{d}$ with $d \ge 3$:
\begin{align*}
\begin{cases}
\partial_{t}^2 u(t,x) - \Delta_{\mathbb{R}^{d}} u(t,x) = F(t,x), \\
u(0,x) =f(x) , \ \partial_{t}|_{t=0} u(t,x) =g(x),
\end{cases}
\end{align*}
where the solutions $u$ satisfy the Strichartz inequality:
\footnote{The symbol $\lesssim$, let us recall, means precisely that there exists a constant $0 < C < + \infty$ such that $\| \nabla_{\mathbb{R} \times \mathbb{R}^d} u \|_{L^p(I; H^{-\sigma,q}(\mathbb{R}^d))} \le C \big(  \| f \|_{H^1(\mathbb{R}^d)} + \| g \|_{L^2(\mathbb{R}^d)} + \| F \|_{L^{\tilde{p}'}(I; H^{\tilde{\sigma},\tilde{q}'}(\mathbb{R}^d))} \big)$, where $\tilde{p}'$ is the dual exponent of $\tilde{p}$, defined by the formula $\frac{1}{\tilde{p}} + \frac{1}{\tilde{p}'} = 1$, and similarly for $\tilde{q}'$.}
\begin{align*}
\| \nabla_{\mathbb{R} \times \mathbb{R}^d} u \|_{L^p(I; H^{-\sigma,q}(\mathbb{R}^d))} \lesssim \left\| f \right\|_{H^1(\mathbb{R}^d)} + \left\| g \right\|_{L^2(\mathbb{R}^d)} + \left\| F \right\|_{L^{\tilde{p}'}(I; H^{\tilde{\sigma},\tilde{q}'}(\mathbb{R}^d))},
\end{align*}
on any (possibly unbounded) interval $I \subseteq \mathbb{R}$ under the assumptions that
\begin{align*}
\textstyle
\sigma = \frac{d+1}{2} \big( \frac{1}{2} - \frac{1}{q} \big), \ \widetilde{\sigma} = \frac{d+1}{2} \big( \frac{1}{2} - \frac{1}{\tilde{q}} \big),
\end{align*}
and the couples $(p,q), (\tilde{p}, \tilde{q}) \in (2, +\infty ] \times [2, 2 \frac{d-1}{d-3})$ fulfill the admissibility conditions:
\begin{align*}
\textstyle
\frac{1}{p} = \frac{d-1}{2} \big( \frac{1}{2} - \frac{1}{q} \big), \ \frac{1}{\tilde{p}} = \frac{d-1}{2} \big( \frac{1}{2} - \frac{1}{\tilde{q}} \big).
\end{align*}
Notice that this inequality also holds at the endpoint $(2, 2 \frac{d-1}{d-3})$ when $d \ge 4$, but fails without additional assumptions when $d=3$ (see \cite{GiVe1995} and \cite{KeTa1998} for more details). These estimates serve as a tool in order to find  minimal regularity conditions on the initial data ensuring well-posedness for corresponding semilinear wave equations, which is addressed in \cite{Kap1994}, and almost fully answered in \cite{LiSo1995, GLS1997, KeTa1998, DGK2001}.\\

Given the rich Euclidean theory, several attempts have been made in order to establish Strichartz inequality for dispersive equations in other settings. We are interested in Riemannian symmetric spaces of noncompact type where relevant questions are now well answered in the rank one case, see for instance \cite{Fon1997, Tat2001, MeTa2011, MeTa2012, APV2012, AnPi2014} on hyperbolic spaces, and \cite{APV2015} on Damek-Ricci spaces. A first study of the wave equation on general symmetric spaces of higher rank was carried out in \cite{Has2011}, where some non optimal estimates were obtained under the strong smoothness ,assumption.\\

In this paper, we prove sharp estimates for the non-shifted wave equation on noncompact symmetric spaces $G/K$ of any rank with $G$ complex. In this case, the Harish-Chandra $\mathbf{c}$-function and the spherical function have elementary expressions, which allow us to analyze accurately the wave kernel. For lack of such expressions in general, our present approach is limited to the class of symmetric spaces $G/K$ with $G$ complex, and maybe to a few other cases.\\

Consider the operator $W_{t}^{\sigma} = \widetilde{D}^{-\sigma} e^{it \sqrt{- \Delta}}$ defined on the symmetric space $X=G/K$, for suitable exponents $\sigma \in \mathbb{C}$, where $\rho$ denotes the half sum of positive roots, and $\widetilde{D} = \sqrt{-\Delta - |\rho|^2 + \widetilde{\rho}^2}$ is the differential operator with a fixed constant $\widetilde{\rho}\ge| \rho |$. To avoid possible singularities, we may consider the analytic family of operators 
\begin{align*}
\widetilde{W}_{t}^{\sigma} = \frac{e^{\sigma^2}}{\Gamma (\frac{d+1}{2}- \sigma)}  \widetilde{D}^{-\sigma} e^{it \sqrt{- \Delta}},
\end{align*}
in the vertical strip $0 \le \re \sigma \le \frac{d+1}{2}$, where $\Gamma$ denotes the Gamma function, see \eqref{analytic family of operators}. We denote by $\widetilde{\omega}_{t}^{\sigma}$ its $K$-bi-invariant convolution kernel. Our first and main result is the following pointwise kernel estimate, which summarizes \cref{estimate omega 0} and \cref{estimate omega infinity} proved in \cref{section pointwise}.
\begin{theorem}[Pointwise kernel estimates]
For all $\sigma\in\mathbb{C}$ with $\re\sigma=\frac{d+1}{2}$, there exists $N\in\mathbb{N}$ such that the following estimates hold for all $x\in X$:
\begin{align*}
    |\widetilde{\omega}_{t}^{\sigma}(x)| \lesssim
    (1+|H_{x}|)^{N} e^{-\langle\rho,H_{x}\rangle}
    \begin{cases}
        |t|^{-\frac{d-1}{2}}
        \qquad & \textnormal{if}\quad 0<|t|<1,\\
        |t|^{-\frac{d}{2}}
        \qquad & \textnormal{if}\quad |t|\ge1,
    \end{cases}
\end{align*}
where $H_{x}\in\overline{\mathfrak{a}^{+}}$ denotes the middle component according to the Cartan decomposition of $x$.
\end{theorem}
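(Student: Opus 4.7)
The plan is to reduce the wave kernel to a Euclidean oscillatory integral on $\mathfrak{a}^*\simeq\mathbb{R}^\ell$, where $\ell=\rank X$, by exploiting the closed-form expressions available when $G$ is complex. Starting from the inverse spherical Fourier transform and substituting the Harish-Chandra formula
\begin{align*}
    \varphi_\lambda(\exp H) = \frac{\pi(\rho)}{\pi(i\lambda)}\cdot\frac{\sum_{w\in W}\det(w)\,e^{i\langle w\lambda, H\rangle}}{\prod_{\alpha\in\Sigma^+} 2\sinh\langle\alpha, H\rangle},
\end{align*}
together with the Plancherel density $|\mathbf{c}(\lambda)|^{-2}\propto\pi(\lambda)^{2}/\pi(\rho)^{2}$, the cancellations between $\pi(i\lambda)$, $\pi(\rho)$, and $|\mathbf{c}(\lambda)|^{-2}$, followed by Weyl-antisymmetry, reduce the kernel to the form
\begin{align*}
    \widetilde{\omega}_t^\sigma(\exp H_x) = \frac{C_\sigma}{\prod_{\alpha\in\Sigma^+} 2\sinh\langle\alpha, H_x\rangle}\int_{\mathfrak{a}^*} \pi(\lambda)\,(|\lambda|^2+\widetilde{\rho}^2)^{-\sigma/2}\,e^{it\sqrt{|\lambda|^2+|\rho|^2}}\,e^{i\langle\lambda, H_x\rangle}\,d\lambda,
\end{align*}
where $C_\sigma$ absorbs the analytic factor $e^{\sigma^2}/\Gamma(\tfrac{d+1}{2}-\sigma)$, which stays bounded on the line $\re\sigma=\tfrac{d+1}{2}$. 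This displays the wave kernel as a Euclidean Klein--Gordon integral in rank dimension $\ell$, with mass $|\rho|$, polynomially weighted by $\pi(\lambda)$ of degree $|\Sigma^+|=(d-\ell)/2$, divided by the Jacobian $J(H_x)=\prod_{\alpha\in\Sigma^+}2\sinh\langle\alpha, H_x\rangle$ of the Cartan decomposition.

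Because all restricted-root multiplicities equal $2$ for complex $G$, one has $\rho=\sum_{\alpha\in\Sigma^+}\alpha$, hence the clean asymptotics $J(H_x)^{-1}\lesssim e^{-\langle\rho, H_x\rangle}$ for $|H_x|$ large---precisely the exponential factor in the target bound. The singularity of $J(H_x)^{-1}$ on the walls is absorbed by the $W$-alternating character of the integral: rewriting $\pi(\lambda)e^{i\langle\lambda, H_x\rangle} = (-i)^{|\Sigma^+|}\pi(\partial_{H_x})e^{i\langle\lambda, H_x\rangle}$ and using that $\pi(\partial_{H_x})$ sends $W$-invariant radial functions on $\mathfrak{a}$ to $\pi(H_x)$ times a smooth radial function, I would factor out the ratio $\pi(H_x)/J(H_x)$, which is globally bounded with only polynomial growth in $|H_x|$.

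For the decay in $t$, I would split the Euclidean integral with a smooth Littlewood--Paley cutoff at $|\lambda|\sim 1$. The low-frequency piece has smooth compactly supported amplitude, and stationary phase on $\Phi(\lambda)=t\sqrt{|\lambda|^2+|\rho|^2}+\langle\lambda,H_x\rangle$ yields the base rate $|t|^{-\ell/2}$ (real critical point $\lambda_0\propto H_x$ when $|t|>|H_x|$); the amplitude factor $\pi(\lambda_0)\sim(|H_x|/|t|)^{(d-\ell)/2}$ then upgrades this to $|t|^{-d/2}$ for $|t|\ge 1$, while outside the Klein--Gordon light cone $|t|\le|H_x|$ one uses $|\nabla\Phi|\gtrsim|H_x|$ and repeated integration by parts in $\lambda$ for rapid decay. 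The high-frequency piece exploits that $\sqrt{|\lambda|^2+|\rho|^2}-|\lambda|=O(|\lambda|^{-1})$ is a smooth bounded perturbation of the wave phase; combined with the amplitude bound $|\pi(\lambda)|(|\lambda|^2+\widetilde{\rho}^2)^{-\sigma/2}\lesssim|\lambda|^{-(\ell+1)/2}$ (from $\re\sigma=\tfrac{d+1}{2}$), a Van der Corput estimate gives the wave-type $|t|^{-(d-1)/2}$ rate for $0<|t|<1$.

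The principal obstacle I anticipate is the transitional regime $|t|\sim|H_x|$, where the critical point of the Klein--Gordon phase escapes and the Hessian degenerates. To handle it I expect to dyadically decompose in $|\lambda|$ and apply refined stationary-phase or Van der Corput estimates, carefully tracking the interaction of $\pi(\lambda)$, $J(H_x)^{-1}$, and the frequency scale. This bookkeeping is where the polynomial loss $(1+|H_x|)^N$ in the statement arises, and it motivates the splitting of the analysis into the two time regimes $0<|t|<1$ and $|t|\ge 1$ already reflected in the statement.
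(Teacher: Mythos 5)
Your reduction via the explicit Weyl-denominator formula for $\varphi_\lambda$ on complex $G$, leading to an $\ell$-dimensional Euclidean integral with weight $\pi(\lambda)$ divided by $\prod_{\alpha\in\Sigma^{+}}2\sinh\langle\alpha,H_x\rangle$, is legitimate and genuinely different from the paper's route: the paper never divides by the Weyl denominator, but instead substitutes the $K$-integral representations \eqref{spherical function noncompact} (for the low-frequency stationary-phase analysis) and \eqref{spherical function complex} (for the high-frequency analysis), so the factor $e^{-\langle\rho,H_x\rangle}$ arrives prepackaged as $\varphi_0(x)$ and there is no wall singularity to divide out; moreover its high-frequency piece becomes a $d$-dimensional Fourier integral over $\mathfrak{p}$, so the $|t|^{-\frac{d-1}{2}}$ rate comes from the standard $J_{\frac{d-2}{2}}$ asymptotics rather than from your combination of an $\ell$-dimensional integral with the degree-$\frac{d-\ell}{2}$ weight. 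Your bookkeeping (pulling $\pi(\lambda)$ out as $\pi(\partial_{H_x})$, the bounded ratio $\pi(H_x)/J(H_x)\asymp\varphi_0$, the critical-point gain $\pi(\lambda_0)\sim(|H_x|/|t|)^{\frac{d-\ell}{2}}$ upgrading $|t|^{-\ell/2}$ to $|t|^{-d/2}$) is consistent and reproduces the paper's numerology for the low-frequency and large-time regimes.

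The genuine gap is at the light cone, and it is not where you locate the principal obstacle. In the high-frequency piece, after polar coordinates, the Bessel asymptotics of the spherical average, and the action of $\pi(\partial_{H_x})$, the net radial integrand on the line $\re\sigma=\frac{d+1}{2}$ decays exactly like $r^{-1-i\im\sigma}e^{i(t\pm|H_x|)r}$ for $r\gtrsim\max(1,|t|^{-1})$ (the powers $r^{\ell-1}$ from the measure, $r^{-\frac{\ell+1}{2}}$ from your amplitude bound, and $r^{-\frac{\ell-1}{2}}$ from the Bessel decay sum to $-1$). This is not absolutely integrable, and as $t\pm|H_x|\to0$ no Van der Corput or stationary-phase argument bounds it: at $\im\sigma=0$ the integral diverges logarithmically on the sphere $|H_x|=|t|$, which is precisely why the paper introduces the regularization $e^{\sigma^2}/\Gamma(\frac{d+1}{2}-\sigma)$. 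The only way to close this case is to integrate by parts against the oscillation $r^{-i\im\sigma}$, which produces a constant of size $|\im\sigma|+|\im\sigma|^{-1}$, and then to use that $1/\Gamma(\frac{d+1}{2}-\sigma)=1/\Gamma(-i\im\sigma)$ vanishes like $|\im\sigma|$ as $\im\sigma\to0$, cancelling the blow-up; this cancellation is also what makes the bound uniform in $\im\sigma$ for the Stein interpolation later. Your proposal treats the prefactor merely as ``bounded'' and asserts the wave decay via Van der Corput, so this borderline mechanism is missing; by contrast, the regime $|t|\sim|H_x|$ with escaping critical point that you flag concerns only the low-frequency part and is benign (the paper disposes of it with the trivial bound $|t|^{-N}(1+|x|)^{N}\varphi_0(x)$ whenever $|x|/|t|\ge\frac{1}{2}$).
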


\begin{remark}
These kernel estimates are sharp in time and similar results hold obviously in the easier case where $\re\sigma>\frac{d+1}{2}$. The value of $N$ will be specified in \cref{estimate omega 0} and \cref{estimate omega infinity}. However, the polynomial $(1+|H_{x}|)^{N}$ is not crucial for further computations since there is an exponential decay following.
\end{remark}

By using the interpolation, we deduce our second result.
\begin{theorem}[Dispersion property]\label{dispersive estimate}
Let $2 < q, \widetilde{q} < + \infty$ and $\sigma \ge (d+1) \max ( \frac{1}{2} - \frac{1}{q} , \frac{1}{2} - \frac{1}{\widetilde{q}})$. Then there exists a constant $C>0$ such that the following dispersive estimates hold:
\begin{align*}
\| \widetilde{W}_{t}^{\sigma} \|_{L^{\widetilde{q}'}(X) \rightarrow L^{q}(X)} \le C
\begin{cases}
|t|^{- (d-1) \max ( \frac{1}{2} - \frac{1}{q} , \frac{1}{2} - \frac{1}{\widetilde{q}})} 
\qquad & \textnormal{if}\quad 0 < |t| < 1, \\
|t|^{-\frac{d}{2}} \qquad & \textnormal{if}\quad |t| \ge 1.
\end{cases}
\end{align*}
\end{theorem}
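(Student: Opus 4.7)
The strategy is to apply Stein's complex interpolation theorem to the analytic family $\sigma \mapsto \widetilde{W}_t^\sigma$ on the vertical strip $0 \le \re\sigma \le \tfrac{d+1}{2}$. The prefactor $e^{\sigma^2}/\Gamma(\tfrac{d+1}{2}-\sigma)$ is designed so that the Gaussian decay in $\im\sigma$ compensates the Stirling growth of $|1/\Gamma|$, giving uniform control on vertical lines and ensuring that the family satisfies the growth hypothesis of Stein's theorem.

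At the left edge $\re\sigma = 0$, the spectral theorem furnishes the uniform bound $\|\widetilde{W}_t^\sigma\|_{L^2 \to L^2} \lesssim 1$: the propagator $e^{it\sqrt{-\Delta}}$ is unitary on $L^2(X)$ and $\widetilde{D}^{-\sigma}$ is $L^2$-bounded by functional calculus when $\re\sigma = 0$. At the right edge $\re\sigma = \tfrac{d+1}{2}$, the operator is convolution on $X$ with the $K$-bi-invariant kernel $\widetilde{\omega}_t^\sigma$, so Young's inequality gives $\|\widetilde{W}_t^\sigma\|_{L^1 \to L^\infty} \le \|\widetilde{\omega}_t^\sigma\|_{L^\infty}$; Theorem~1.1 then bounds this by $|t|^{-(d-1)/2}$ for $|t|<1$ and by $|t|^{-d/2}$ for $|t|\ge 1$, the polynomial factor $(1+|H_x|)^N$ being dominated on $\overline{\mathfrak{a}^+}$ by $e^{-\langle\rho,H_x\rangle}$ since the half-sum $\rho$ lies in the open positive Weyl chamber.

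Stein interpolation between these two endpoints then yields the diagonal estimate $\widetilde{W}_t^\sigma : L^{q'}(X) \to L^q(X)$ at $\sigma = (d+1)(\tfrac12-\tfrac1q)$, with the stated time decay, which settles the case $\tilde q = q$. For the off-diagonal case $\tilde q \neq q$ at $\sigma = (d+1)\max(\tfrac12-\tfrac1q,\tfrac12-\tfrac1{\tilde q})$, I would combine the two diagonal bounds $L^{q'}\to L^q$ and $L^{\tilde q'}\to L^{\tilde q}$ by a further interpolation and/or by absorbing the surplus smoothing $\widetilde{D}^{-s}$ via its $L^p$-boundedness for $s\ge 0$. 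The main technical obstacle is retaining the sharp large-time rate $|t|^{-d/2}$ uniformly in $q,\tilde q$: a naive Riesz--Thorin step would yield only $|t|^{-d\theta/2}$ at intermediate heights, so the full kernel bound of Theorem~1.1 (which already carries the optimal $|t|^{-d/2}$ factor thanks to the exponential volume growth of $X$) must be carefully preserved through the interpolation rather than discarded by convex combination with the $L^2\to L^2$ bound.
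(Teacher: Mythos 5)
There is a genuine gap, and you have in fact put your finger on it yourself without closing it: the large-time rate $|t|^{-d/2}$. Stein interpolation between $\|\widetilde W_t^{\sigma}\|_{L^2\to L^2}\lesssim 1$ at $\re\sigma=0$ and $\|\widetilde W_t^{\sigma}\|_{L^1\to L^\infty}\lesssim |t|^{-d/2}$ at $\re\sigma=\frac{d+1}{2}$ yields only $|t|^{-d(\frac12-\frac1q)}$ at the intermediate height $\sigma=(d+1)(\frac12-\frac1q)$, and no rearrangement of these two endpoint bounds can do better. Saying that the sharp rate ``must be carefully preserved through the interpolation'' is not an argument; the paper's resolution is a different tool altogether, namely the Kunze--Stein phenomenon (Lemma 4.1, after Herz's majoration principle): for a $K$-bi-invariant kernel $\kappa$ one has $\|\cdot*\kappa\|_{L^{q'}\to L^{q}}\lesssim\big(\int_G \varphi_0\,|\kappa|^{q/2}\big)^{2/q}$. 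The kernel is split as $\omega_t^{\sigma}=\omega_t^{\sigma,0}+\omega_t^{\sigma,\infty}$; the low-frequency piece (restricted to $|x|\le|t|/2$, where it carries the $|t|^{-d/2}$ pointwise bound with polynomial loss) is fed directly into this inequality, and the factor $e^{-(\frac{q}{2}-1)\langle\rho,H\rangle}$ surviving after multiplication by the volume density makes the spatial integral converge for every $q>2$, so the full $|t|^{-d/2}$ passes to the operator norm without any convex combination with the $L^2$ bound. Only the high-frequency piece $\widetilde\omega_t^{\sigma,\infty}$ is handled by Stein interpolation, and there the loss is harmless because its large-time sup bound is $O(|t|^{-N})$ for arbitrary $N$. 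Note also that a purely Young-type substitute (using $\|\omega_t^{\sigma,0}\|_{L^q}$) fails for $q$ near $2$, since the $L^2$ norm of the low-frequency kernel diverges; the $\varphi_0$-weighted $L^{q/2}$ quantity in Lemma 4.1 is exactly what makes all $q>2$ accessible.

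Two smaller points. First, your treatment of the off-diagonal case $\widetilde q\neq q$ does not work as stated: Riesz--Thorin interpolation of two diagonal bounds $L^{q'}\to L^{q}$ and $L^{\widetilde q'}\to L^{\widetilde q}$ produces only further diagonal bounds, and absorbing derivatives via Sobolev embedding on $X$ changes the regularity index, not the integrability pair in the required way. The paper instead derives $L^{1}\to L^{q}$ and $L^{q'}\to L^{\infty}$ bounds from the $L^q$ norm of the kernel (Young's inequality) and interpolates these together with the diagonal dispersive estimate. Second, your small-time argument for the diagonal case is essentially the paper's (the interpolated exponent $\frac{d-1}{2}\theta=(d-1)(\frac12-\frac1q)$ does match), provided one checks that the low-frequency part, which has no small-time decay at all in sup norm, is still controlled --- the paper again does this via Lemma 4.1 rather than through the $L^1\to L^\infty$ endpoint.
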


\begin{remark}
At the endpoint $q=\widetilde{q}=2$, $t \mapsto e^{it \sqrt{-\Delta}}$ is a one-parameter group of unitary operators on $L^2(X)$. 
\end{remark}

\begin{remark}
These estimates, which are proved on real hyperbolic spaces in \cite{AnPi2014}, extend straightforwardly to all the noncompact symmetric spaces of rank $1$, and more generally to all Damek-Ricci spaces. In these cases, the large time decay is $|t|^{- \frac{3}{2}}$, where $"3"$ corresponds to the so-called dimension at infinity of $X$. In higher rank, the dimension at infinity coincides with the manifold dimension $d$ of $G/K$ if and only if $G$ is complex.
\end{remark}

\begin{remark}
In \cite{CGM2002}, M. Cowling, S. Giulini and S. Meda have described the $L^p$-$L^q$ boundedness of a semi-group of operators related to the shifted Laplacian in constant time $t=1$ on general noncompact symmetric spaces. However, sharp dispersive properties with a general $t\in \mathbb{R}^{*}$ are crucial for proving Strichartz type inequalities and for studying related PDE problems. Along the lines in \cite{CGM2002}, we may derive a sharp dispersive inequality in small time $|t| \le 1$. But as mentioned by these authors, their method does not yield good estimates in large time. \\
\end{remark}

This paper is organized as follows. After recalling some basic notations and reviewing harmonic analysis on noncompact symmetric spaces in \cref{section preliminaries}, we derive pointwise kernel estimates in \cref{section pointwise}. We prove the dispersive estimates by interpolation arguments in \cref{section dispersive}. As a consequence, we deduce in \cref{section applcations}, Strichartz inequalities for a large family of admissible pairs and obtain well-posedness results for the associated semilinear wave equation. In \cref{LSS}, we discuss similar results on a class of locally symmetric spaces.\\

\noindent\textbf{Acknowledgments.}
The author is supported by the doctoral fellowship of the University of Orléans and by the Methusalem Programme grant: Analysis and Partial Differential Equations. The present paper is part of the author’s Ph.D. thesis, supervised by Jean-Philippe Anker and Nicolas Burq, and the author would like to thank them for sharing their knowledge and experience with him. The author also thanks the referees for their valuable suggestions.

\section{Preliminaries}\label{section preliminaries}
We review in this section some elementary notations and facts about noncompact symmetric spaces. We refer to \cite{Hel1962, Hel2000} for more details. \\

Let $G$ be a complex semisimple Lie group, connected, noncompact, with finite center, and $K$ be a maximal compact subgroup of $G$. The homogeneous space $X=G/K$ is a Riemannian symmetric space of  noncompact type and dimension $d \ge 3$.
Let $\mathfrak{g} = \mathfrak{k} \oplus \mathfrak{p}$ be the Cartan decomposition of the Lie algebra of $G$. The Killing form of $\mathfrak{g}$ induces a $K$-invariant inner product on $\mathfrak{p}$, hence a $G$-invariant Riemannian metric on $X$, whose tangent space at the origin is identified with $\mathfrak{p}$.\\

Fix a maximal abelian subspace $\mathfrak{a}$ in $\mathfrak{p}$. The
rank of $X$ is the dimension $\ell$ of $\mathfrak{a}$. Let $\Sigma \subset \mathfrak{a}$ be the root system of $(  \mathfrak{g}, \mathfrak{a})$ which is reduced, and denote by $W$ the Weyl group associated to $\Sigma$. Choose a set $\Sigma^{+}$ of positive roots, let $\mathfrak{a}^{+} \subset \mathfrak{a}$ be the corresponding positive Weyl chamber and $\overline{\mathfrak{a}^{+}}$ be its closure. Notice that $d = \ell + 2| \Sigma^{+} |$ in our case.
As usual, $\rho \in \mathfrak{a}^{+}$ denotes the half sum of positive roots, counted with their multiplicities, which is given in our case by 
$\textstyle \rho =  \sum_{\alpha \in \Sigma^{+}} \alpha$. It is well known that the spectrum of the negative Laplace-Beltrami operator $- \Delta$ on $L^{2}(X)$ is the half-line $[ |\rho|^2, + \infty )$.\\

Denote by $\mathfrak{n}$ the nilpotent Lie subalgebra of $\mathfrak{g}$ associated with $\Sigma^{+}$, and by $N$ the corresponding Lie subgroup of $G$. Then we have the following two decompositions of $G$:
\begin{align*}
\begin{cases}
G= N \left( \exp \mathfrak{a} \right) K \quad &(Iwasawa), \\
G= K \left( \exp \overline{\mathfrak{a}{+}} \right) K \quad &(Cartan).
\end{cases}
\end{align*}

In the Cartan decomposition, the Haar measure on $G$ writes
\begin{align}\label{Cartan decomposition for Haar measure}
\int_G f(g) dg = const. \int_{K} dk_1 \int_{\mathfrak{a}^{+}} \delta(H) dH \int_{K} f(k_1 (\exp H) k_2 ) dk_2,
\end{align}
where \footnote{The symbol $f \asymp g$ between two nonnegative expressions means that there exist two constants $0 < c_1 \le c_2 < + \infty$ such that $c_1 g \le f \le c_2 g$.}
\begin{align*}
\delta(H)
= \prod_{\alpha \in \Sigma^{+}} \left( \sinh \langle\alpha,H\rangle  \right )^{2} 
\asymp \Big\lbrace \prod_{\alpha \in \Sigma^{+}} \frac{\left\langle {\alpha, H} \right\rangle}{1 + \left\langle {\alpha, H} \right\rangle }  \Big\rbrace^2 e^{ \left\langle {2 \rho, H} \right\rangle} \qquad \forall H \in \overline{\mathfrak{a}^{+}}.\\
\end{align*}

Denote by $\mathcal{S}(K \backslash G /K)$ the Schwartz space of $K$-bi-invariant functions on $G$. The spherical Fourier transform $\mathcal{H}$ is defined by
\begin{align*}
\mathcal{H} f (\lambda) = \int_{G} f(x) \varphi_{\lambda} (x) dx \qquad \forall \lambda \in \mathfrak{a},\ \forall f \in \mathcal{S} (K \backslash G/K).
\end{align*}
Here $\varphi_{\lambda} \in \mathcal{C}^{\infty} (K \backslash G/K)$ denotes the spherical function of index $\lambda \in \mathfrak{a}_{\mathbb{C}}$, which is a radial eigenfunction of the Laplace-Beltrami operator  $\Delta$ satisfying 
\begin{equation}\label{eigenfunction}
\begin{cases}
- \Delta \varphi_{ \lambda} (x) = \left(  |\lambda|^2 + |\rho|^2 \right) \varphi_{\lambda}(x), \\
\varphi_{\lambda}(0) =1.
\end{cases}
\end{equation}
In the noncompact case, the spherical functions are given by
\begin{align}\label{spherical function noncompact}
\varphi_{\lambda} (x) = \int_{K} dk \ e^{  \langle i \lambda + \rho , A(kx) \rangle} 
\qquad \forall  \lambda \in {\mathfrak{a}},
\end{align}
where $A(kx)$ is the unique $\mathfrak{a}$-component in the Iwasawa decomposition of $kx$. It satisfies the basic estimate
\begin{align}\label{basic spherical function estimate}
| \varphi_{\lambda} (x) | \le \varphi_{0} (x) \qquad \forall \lambda \in \mathfrak{a},\ \forall x \in X,
\end{align}
where
\begin{align}
\varphi_{0} (\exp H) 
= \prod_{\alpha \in \Sigma^{+}} \frac{\left\langle {\alpha,H} \right\rangle}{\sinh \left\langle {\alpha ,H} \right\rangle}
\asymp \Big\lbrace \prod_{\alpha \in \Sigma^{+}} (1 + \left\langle {\alpha, H} \right\rangle ) \Big\rbrace e^{- \left\langle {\rho, H} \right\rangle} 
\qquad \forall H \in  \overline{\mathfrak{a}^{+}}.
\end{align}
If $G$ is complex, we have also
\begin{align}\label{spherical function complex}
\varphi_{\lambda} (x) =  \varphi_{0}(x) \int_{K} dk \ e^{i \left\langle {(\ad k). \lambda,x} \right\rangle}.
\end{align}
{ } \\

Recall that $W$ is the Weyl group associated to $\Sigma$.
We denote by $\mathcal{S} \left( \mathfrak{a} \right)^{W}$ the subspace of $W$-invariant functions in the Schwartz space $\mathcal{S} \left( \mathfrak{a} \right)$. Then $\mathcal{H}$ is an isomorphism between $\mathcal{S}(K \backslash G /K)$ and $\mathcal{S} \left( \mathfrak{a} \right)^{W}$. The inverse spherical Fourier transform is defined by
\begin{align}\label{inverse formula}
f (x) = const. \int_{\mathfrak{a}} d \lambda \ \mathcal{H}f(\lambda) \varphi_{\lambda} (x) |\mathbf{c(\lambda)}|^{-2} 
\qquad \forall x \in G ,\ \forall f  \in \mathcal{S} (\mathfrak{a})^{W},
\end{align}
where $\mathbf{c(\lambda)}$ is the Harish-Chandra $\mathbf{c}$-function. If $G$ is complex, the Plancherel density reads 
\begin{align}\label{c function complex}
| \mathbf{c} ( \lambda)|^{-2} = \mathbf{\pi} (\lambda)^2 = {\textstyle \prod_{\alpha \in \Sigma^{+}}} \left\langle {\alpha, \lambda} \right\rangle^2,
\end{align}
and in particular, the inverse spherical Fourier transform \eqref{inverse formula} becomes
\begin{align*}
f (x) = const. \ \varphi_{0}(x) \int_{\mathfrak{p}} d \lambda \ \mathcal{H}f(\lambda)  \ e^{- i \left\langle {(\ad k). \lambda,x} \right\rangle}.\\
\end{align*}

Recall at last the asymptotic expansion of the Bessel function of the first kind: 
\begin{align}\label{Bessel function asymp}
J_{m}(r) \sim r^{-1/2} e^{ir} \sum_{j=0}^{\infty} a_j(m) r^{-j} + r^{-1/2} e^{-ir} \sum_{j=0}^{\infty} b_j(m) r^{-j}
\qquad \textnormal{as}\quad r \rightarrow + \infty,
\end{align}
where $a_j(m)$ and $b_j(m)$ are suitable coefficients, see for instance 
\cite[338]{Ste1993}.\\
\section{Pointwise estimates of the wave  kernel}\label{section pointwise}
In this section, we derive pointwise estimates for the $K$-bi-invariant convolution kernel $\omega_{t}^{\sigma}$ of the operator $W_{t}^{\sigma} = \widetilde{D}^{-\sigma} e^{it \sqrt{- \Delta}}$ on the symmetric space $X$:
\begin{align*}
W_{t}^{\sigma}  f(x) = f * \omega_{t}^{\sigma}(x) 
= \int_{G} dy \ \omega_{t}^{\sigma} (y^{-1}x) f(y),
\end{align*}
for suitable exponents $\sigma \in \mathbb{C}$. Via the spherical Fourier transform and \eqref{eigenfunction}, the negative Laplace-Beltrami operator $- \Delta$ corresponds to  $|\lambda|^2 + |\rho|^2$, hence the operator $\widetilde{D}$ to $\sqrt{|\lambda|^2 + \widetilde{\rho}^2}$. The inverse spherical Fourier transform implies that
\begin{align}\label{wave kernel}
\omega_{t}^{\sigma} (x) = const. \int_{\mathfrak{a}} \ d \lambda \ | \mathbf{c} ( \lambda)|^{-2} \ ( | \lambda |^2 + \widetilde{\rho}^2)^{ - \frac{\sigma}{2}} e^{it \sqrt{| \lambda|^2 + |\rho|^2}} \varphi_{\lambda}(x).
\end{align}
As already observed for hyperbolic spaces (see for instance \cite{APV2012}), the classical Euclidean rescaling method fails since this kernel has different behaviors depending whether $t$ is small or large.  We will prove sharp time pointwise estimates for this kernel along the lines of \cite{AnPi2014}.\\

Consider smooth even cut-off functions $\chi_{0}$ and $\chi_{\infty}$ on $\mathbb{R}$ such that
\begin{align*}
\chi_{0}(r) + \chi_{\infty} (r) =1
\qquad \textnormal{and} \qquad 
\begin{cases}
\chi_{0} (r) = 1 \qquad & \textnormal{if}\quad |r| \le 1, \\
\chi_{\infty} (r) = 1 \qquad & \textnormal{if}\quad |r| \ge 2. \\
\end{cases}
\end{align*}
Let us split up
\begin{align*}
\omega_{t}^{\sigma} (x) =&
\omega_{t}^{\sigma,0} (x) + \omega_{t}^{\sigma, \infty} (x) \\
=&
const. \int_{\mathfrak{a}} \ d \lambda \  \chi_{0}^{\rho} (\lambda) | \mathbf{c} ( \lambda)|^{-2} \ ( | \lambda |^2 + \widetilde{\rho}^2)^{ - \frac{\sigma}{2}} e^{it \sqrt{| \lambda|^2 + |\rho|^2}} \varphi_{\lambda}(x) \\
& + const. \int_{\mathfrak{a}} \ d \lambda \  \chi_{\infty}^{\rho} (\lambda) | \mathbf{c} ( \lambda)|^{-2} \ ( | \lambda |^2 + \widetilde{\rho}^2)^{ - \frac{\sigma}{2}} e^{it \sqrt{| \lambda|^2 + |\rho|^2}} \varphi_{\lambda}(x)
\end{align*}
where $ \chi_{0}^{\rho} (\lambda) = \chi_{0} (| \lambda | / | \rho |)$ and $ \chi_{\infty}^{\rho} (\lambda) = \chi_{\infty} (| \lambda | / | \rho |)$ are radial cut-off functions defined on $\mathfrak{a}$.
We shall see later that the kernel $\omega_{t}^{\sigma, \infty}(x)$ has a logarithmic singularity on the sphere $|x| = t$ when $\sigma = \frac{d+1}{2}$. To get around this problem, we consider the analytic family of operators
\begin{align}\label{analytic family of operators}
\widetilde{W}_{t}^{\sigma, \infty} = \frac{e^{\sigma^2}}{\Gamma (\frac{d+1}{2}- \sigma)} \ \chi_{\infty}^{\rho} \big( \sqrt{- \Delta - | \rho |^2} \big) \widetilde{D}^{-\sigma} e^{it \sqrt{- \Delta}},
\end{align}
in the vertical strip $0 \le \re \sigma \le \frac{d+1}{2}$ and the corresponding kernels
\begin{align*}
\widetilde{\omega}_{t}^{\sigma, \infty} (x)
=   \frac{e^{\sigma^2}}{\Gamma (\frac{d+1}{2}- \sigma)} \  \int_{\mathfrak{a}} \ d \lambda \  \chi_{\infty}^{\rho} (\lambda) | \mathbf{c} ( \lambda)|^{-2} \ ( | \lambda |^2 + \widetilde{\rho}^2)^{ - \frac{\sigma}{2}} e^{it \sqrt{| \lambda|^2 + |\rho|^2}} \varphi_{\lambda}(x).
\end{align*}
Notice that the Gamma function will allow us to deal with the boundary point $\sigma = \frac{d+1}{2}$, while the exponential function ensures boundedness at infinity in the vertical strip.

\begin{theorem}\label{estimate omega 0}
Let $\sigma \in \mathbb{C}$. The following pointwise estimates hold for the kernel $\omega_{t}^{\sigma,0}$:
\begin{enumerate}[label=(\roman*)]
\item For all $t \in \mathbb{R}^{*}$, we have
\begin{align*}
|\omega_{t}^{\sigma,0}(x) | \lesssim \varphi_{0}(x).
\end{align*}

\item Assume that $|t| \ge 1$.
\begin{enumerate}
\item If $ \frac{|x|}{|t|} \ge \frac{1}{2}$, then
\begin{align*}
|\omega_{t}^{\sigma,0}(x) | \lesssim |t|^{-N} (1+ |x|)^{N} \varphi_{0}(x),
\end{align*}
for every $N \in \mathbb{N}$.
\item If $ \frac{|x|}{|t|} \le \frac{1}{2}$, then
\begin{align*}
| \omega_{t}^{\sigma,0}(x) | \lesssim |t|^{-\frac{d}{2}} (1+|x|)^{\frac{d-\ell}{2}} \varphi_{0} (x).
\end{align*}
\end{enumerate}
\end{enumerate}
\end{theorem}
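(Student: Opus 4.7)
The plan is to exploit the fact that $G$ is complex in order to reduce the oscillatory integral \eqref{wave kernel} (restricted to low frequencies by $\chi_0^\rho$) to an essentially Euclidean oscillatory integral on $\mathfrak{p}$. Concretely, combining \eqref{c function complex} and \eqref{spherical function complex}, and invoking the Weyl integration formula to identify $\int_{\mathfrak{a}}\pi(\lambda)^{2}\,d\lambda\int_{K}dk$ with a constant multiple of Lebesgue measure on $\mathfrak{p}$, one rewrites
\[
\omega_{t}^{\sigma,0}(x)\;=\;C\,\varphi_{0}(x)\int_{\mathfrak{p}}\chi_{0}^{\rho}(|\mu|)\,(|\mu|^{2}+\widetilde{\rho}^{2})^{-\sigma/2}\,e^{it\sqrt{|\mu|^{2}+|\rho|^{2}}}\,e^{-i\langle\mu,H_{x}\rangle}\,d\mu.
\]
This representation is the structural point of the proof: once in place, the remaining analysis is a classical oscillatory-integral calculation on a Euclidean space, and all of the symmetric-space geometry is encoded in the prefactor $\varphi_{0}(x)$.

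Part (i) then follows from a trivial absolute bound: the amplitude in $\mu$ is uniformly bounded on its compact support $\{|\mu|\le 2|\rho|\}$, so the $\mu$-integral is $O(1)$ uniformly in $(t,x)$, giving $|\omega_{t}^{\sigma,0}(x)|\lesssim\varphi_{0}(x)$. Part (ii)(a) is an immediate soft consequence of (i): since $|x|/|t|\ge 1/2$ and $|t|\ge 1$ force $(1+|x|)/|t|\gtrsim 1$, one has $|t|^{-N}(1+|x|)^{N}\gtrsim 1$ for every $N\in\mathbb{N}$, and the pointwise bound $\varphi_{0}(x)$ from (i) already lies below the required estimate.

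For part (ii)(b) I would carry out a stationary-phase analysis. Passing to polar coordinates $\mu=r\omega$ with $r>0$, $\omega\in S^{d-1}$, and performing the spherical integration replaces the angular integral by a Bessel factor, leaving the one-dimensional expression
\[
\omega_{t}^{\sigma,0}(x)\;=\;C\,\varphi_{0}(x)\,|x|^{-(d-2)/2}\int_{0}^{\infty}r^{d/2}\,\chi_{0}(r/|\rho|)\,(r^{2}+\widetilde{\rho}^{2})^{-\sigma/2}\,e^{it\sqrt{r^{2}+|\rho|^{2}}}\,J_{(d-2)/2}(r|x|)\,dr.
\]
Inserting the Bessel asymptotic \eqref{Bessel function asymp} in the regime $r|x|\gtrsim 1$ splits the integrand into two pieces with phases $\Psi_{\pm}(r)=t\sqrt{r^{2}+|\rho|^{2}}\pm r|x|$. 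Exactly one sign admits a critical point inside the support of $\chi_{0}$, namely $r_{c}=|x||\rho|/\sqrt{t^{2}-|x|^{2}}\le|\rho|/\sqrt{3}$ (using $|x|\le|t|/2$); a direct computation gives $|\Psi''(r_{c})|\asymp|t|$, and the stationary-phase method then produces the leading contribution of order $(r_{c}^{(d-1)/2}\,|x|^{-(d-1)/2})\cdot|t|^{-1/2}\asymp|t|^{-d/2}$, which is exactly the target decay. The non-stationary piece (opposite sign) and the small-argument regime $r|x|\lesssim 1$ (where one substitutes the Taylor expansion of $J_{(d-2)/2}$) are handled by repeated integration by parts against $e^{it\sqrt{r^{2}+|\rho|^{2}}}$, which gains a factor of $|t|^{-1}$ at each step and contributes only lower-order terms.

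The main obstacle is securing these estimates \emph{uniformly} across the whole range $0\le|x|\le|t|/2$, in particular near $|x|\sim|t|/2$ (where $r_{c}$ approaches the support boundary of $\chi_{0}$ and the non-degeneracy of $\Psi''(r_{c})$ must be verified carefully) and in the transition zone $r|x|\sim 1$ between the Bessel asymptotic and its power-series expansion. The remainders in the stationary-phase expansion and the boundary contributions coming from the splitting at $r|x|=1$ each introduce at most a polynomial loss in $|x|$, which is precisely what is absorbed by the factor $(1+|x|)^{(d-\ell)/2}$ appearing in the stated bound; as already noted in the remark following \cref{dispersive estimate}, this polynomial growth is harmless because of the exponential decay furnished by $\varphi_{0}(x)$.
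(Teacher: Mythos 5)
Your reduction of $\omega_{t}^{\sigma,0}$ to $\varphi_{0}(x)$ times a Euclidean oscillatory integral on $\mathfrak{p}$ (via \eqref{c function complex}, \eqref{spherical function complex} and the Weyl integration formula) is legitimate, and parts (i) and (ii)(a) go through as you say. But note that this route differs from the paper's: the paper deliberately keeps the general integral formula \eqref{spherical function noncompact} for $\varphi_{\lambda}$ and runs an $\ell$-dimensional stationary phase on $\mathfrak{a}$, where the amplitude contains $\pi(\lambda)^{2}$ vanishing to order $d-\ell$ at the origin; the bound $t^{-\ell/2}\,|\pi(\lambda_{0})|^{2}\lesssim t^{-\ell/2}(|x|/t)^{d-\ell}$ is exactly where the factor $(1+|x|)^{\frac{d-\ell}{2}}$ comes from, and this version of the argument works on arbitrary noncompact symmetric spaces. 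Your $\mathfrak{p}$-version is specific to $G$ complex but would, if completed, give the cleaner bound $|t|^{-d/2}$ with no polynomial factor.

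The genuine gap is in your treatment of part (ii)(b) after passing to polar coordinates. The Bessel asymptotic \eqref{Bessel function asymp} is only available for $r|x|\gtrsim1$, while the critical point of the full phase sits at $r_{c}\asymp|x|/t$; hence $r_{c}|x|\gtrsim1$ only when $|x|\gtrsim\sqrt{t}$. For $|x|\lesssim\sqrt{t}$ the stationary point lies inside the small-argument ball $\{r\le1/|x|\}$, and that region is \emph{not} lower order: it carries the full contribution of size $|t|^{-d/2}$ (for $|x|\asymp1$ it is the dominant region). Your proposed remedy — repeated integration by parts against $e^{it\sqrt{r^{2}+|\rho|^{2}}}$, "gaining $|t|^{-1}$ at each step" — fails there, because $\partial_{r}\bigl(t\sqrt{r^{2}+|\rho|^{2}}\bigr)\asymp tr$ degenerates at $r=0$: each integration by parts costs a factor $r^{-2}$ against the volume/Bessel weight $r^{d-1+2k}$, so only about $d/2$ steps are possible before the integral diverges at the origin, and the boundary (or cutoff) terms at $r\asymp1/|x|$ are of size $t^{-1}|x|^{-(d-2)}$, which for moderate $|x|$ is far larger than $|t|^{-d/2}$. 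A related uniformity problem occurs in your stationary-phase term itself, since the amplitude $|x|^{-(d-1)/2}r^{(d-1)/2}$ and its derivatives degenerate as $r_{c}\to0$. The correct completion is to abandon the Bessel reduction in this regime and perform a genuine $d$-dimensional stationary phase on $\mathfrak{p}$ around the nondegenerate critical point $\mu_{0}$ (with $|\mu_{0}|\le|\rho|/\sqrt{3}$, uniformly interior to $\operatorname{supp}\chi_{0}^{\rho}$, and Hessian uniformly nondegenerate there), i.e.\ the Morse-type change of variables, Gaussian integrals and non-stationary integration by parts that the paper carries out on $\mathfrak{a}$.
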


\begin{proof}
By symmetry we may assume that $t >0$. Recall that
\begin{align}\label{omega 0}
\omega_{t}^{\sigma,0} (x)
= const. \int_{\mathfrak{a}} \ d \lambda \  \chi_{0}^{\rho} (\lambda) | \mathbf{c} ( \lambda)|^{-2} \ ( | \lambda |^2 + \widetilde{\rho}^2)^{ - \frac{\sigma}{2}} e^{it \sqrt{| \lambda|^2 + |\rho|^2}} \varphi_{\lambda}(x).
\end{align}
$(i)$ follows from the representation \eqref{c function complex} and the  estimate \eqref{basic spherical function estimate} that
\begin{align*}
| \omega_{t}^{\sigma,0} (x) | 
\lesssim \int_{|\lambda| < 2 | \rho |} d\lambda \ | \lambda |^{d-\ell} | \varphi_{\lambda} (x) | \lesssim \varphi_{0} (x).
\end{align*}
Moreover $(ii).(a)$ is straightforward since $ \frac{|x|}{t} \ge \frac{1}{2}$.\\

We prove $(ii).(b)$ by substituting in \eqref{omega 0} the integral representation \eqref{spherical function noncompact} of $\varphi_{\lambda}$ and the expression \eqref{c function complex} of $ | \mathbf{c} ( \lambda)|^{-2} $. Specifically,
\begin{align*}
\omega_{t}^{\sigma,0} (x) = const. \
\int_{K} dk \ e^{ \langle \rho, A(kx)\rangle} 
\int_{\mathfrak{a}} \ d \lambda \ a(\lambda) \ e^{it \sqrt{| \lambda|^2 + |\rho|^2} + i \langle {\lambda, A (kx)} \rangle},
\end{align*}
where $a(\lambda) = \chi_{0}^{\rho}  (\lambda) \pi (\lambda)^2 ( | \lambda |^2 + \widetilde{\rho}^2)^{ - \frac{\sigma}{2}}$. Since
\begin{align*}
\int_{K} dk \ e^{ \langle \rho, A (kx) \rangle} = \varphi_{0}(x),
\end{align*}
it remains to estimate the oscillatory integral
\begin{align*}
I_{0}(t,x) = 
\int_{\mathfrak{a}} \ d \lambda \ a(\lambda) \ e^{it \psi (\lambda)}
\end{align*}
with amplitude $a(\lambda)$ and phase

\begin{align}\label{Sect 3. phase}
\textstyle
\psi (\lambda ) = \sqrt{| \lambda|^2 + |\rho|^2} + \big\langle { \frac{A}{t}, \lambda} \big\rangle,
\end{align}
where $A = A(kx)$ is a vector in $\mathfrak{a}$ such that $|A(kx)| \le |x|$. This is achieved by carrying out in our case, the stationary phase analysis described in \cite[\nopp VIII.2.3]{Ste1993}. The critical point $\lambda_{0}$ of the phase $\psi$ is given by
\begin{align*}
\textstyle
( | \lambda_{0} |^2 + | \rho |^2)^{-\frac{1}{2}} \lambda_{0} = 
- \frac{A}{t}.
\end{align*}
Hence
\begin{align}\label{critical point}
\textstyle
| \lambda_{0} | = | \rho | \frac{|A|}{t} \big( 1 - \frac{|A|^2}{t^2} \big)^{-\frac{1}{2}}
\le | \rho | \frac{|x|}{t} \big( 1 - \frac{|x|^2}{t^2} \big)^{-\frac{1}{2}}.
\end{align}
Denote by $B(\lambda_{0} , \eta)$ the ball in $\mathfrak{a}$  centered at $\lambda_{0}$ :
\begin{align*}
B(\lambda_{0} , \eta) = \big\lbrace \lambda \in \mathfrak{a} \ \big| \ | \lambda - \lambda_{0} | \le \eta \big\rbrace,
\end{align*}
where the radius $ \eta$ will be chosen later. Notice that if $\frac{|x|}{t} \le \frac{1}{2}$, then $|  \lambda_{0} | < \frac{|\rho|}{\sqrt{3}}$ and $| \lambda | < | \rho | + \eta$ for all $\lambda \in B(\lambda_{0} , \eta) $.
Let $P_{\lambda}$ be the projection onto the vector spanned by $\frac{\lambda}{| \lambda |}$. Then $|\lambda|^2 P_{\lambda} = \lambda \otimes \lambda$ and the Hessian matrix of $\psi$ is given by
\begin{align*}
\hess \psi ( \lambda ) 
=& ( |\lambda|^2 + |\rho|^2)^{-\frac{1}{2}} I_{\ell} -  ( |\lambda|^2 + |\rho|^2)^{-\frac{3}{2}} \lambda \otimes \lambda \\
=&  ( |\lambda|^2 + |\rho|^2)^{-\frac{3}{2}}  \big\lbrace |\rho|^2P_{\lambda} + ( |\lambda|^2 + |\rho|^2) (I_{\ell} - P_{\lambda}) \big\rbrace \\
=&
 ( |\lambda|^2 + |\rho|^2)^{-\frac{3}{2}} 
\left(
\begin{array}{c|c}
| \rho|^2  & 0 \\
 \hline
{ } & { } \\
0 &  ( |\lambda|^2 + |\rho|^2) I_{\ell-1} \\
{ } & { } \\
\end{array}
\right),
\end{align*}
which is a positive definite symmetric matrix. Hence $ \lambda_{0}$ is a nondegenerate critical point. Since $\nabla \psi ( \lambda_{0} ) = 0$, we have
\begin{align*}
\psi ( \lambda ) - \psi ( \lambda_{0} ) =  ( \lambda - \lambda_{0})^{T}
\Big\lbrace
\underbrace{\int_{0}^{1} ds \ (1-s)   \hess \psi \big( \lambda_{0} + s ( \lambda - \lambda_{0} ) \big) }_{= M(\lambda)}
\Big\rbrace
(\lambda -  \lambda_{0} ),
\end{align*}
where  $M(\lambda)$ belongs, for every $\lambda \in B(\lambda_{0} , \eta)$, to a compact subset of the set of positive definite symmetric matrices.
We introduce a new variable $\mu = M(\lambda)^{\frac{1}{2}} (\lambda -  \lambda_{0})$, then $|\mu|^{2} = \psi( \lambda) - \psi (\lambda_{0})$ and $\mu = 0$ if and only if $\lambda = \lambda_{0}$. Notice that for every $k \in \mathbb{N}$, there exists $C_{k} > 0$ such that
\begin{align}\label{estimate of M}
| \nabla^{k}  M(\lambda)^{\frac{1}{2}} | \le C_k
\qquad \forall \lambda \in B(\lambda_{0}, \eta).
\end{align}
Denote by $J(\lambda)$ the Jacobian matrix such that $d \mu = \det \left[  J(\lambda) \right] d \lambda$, then we can choose $\eta >0 $ small enough such that
\begin{align}\label{estimate of J}
\det \left[  J(\lambda)  \right] > \frac{1}{2} \det \left[  M(\lambda)^{\frac{1}{2}}  \right]
\qquad \forall \lambda \in B(\lambda_{0} , \eta).
\end{align}
{ }\\

Now let us divide the study of $I_{0}(t,x)$ into two parts, corresponding to the decomposition
\begin{align*}
I_{0}(t,x) = 
\underbrace{\int_{\mathfrak{a}} \ d \lambda \ \chi_{\eta}^{-}(\lambda) a(\lambda) \ e^{it \psi (\lambda)}}_{ = I_{0}^{-}(t,x)} +
\underbrace{\int_{\mathfrak{a}} \ d \lambda \ \chi_{\eta}^{+}(\lambda) a(\lambda) \ e^{it \psi (\lambda)}}_{ = I_{0}^{+}(t,x)},
\end{align*}
associated with the smooth cut-off functions
\begin{align*}
\chi_{\eta}^{-}(\lambda) = \chi_{0} \Big( \frac{|\lambda - \lambda_{0}|}{2\eta} \Big)
\qquad \textnormal{and} \qquad 
\chi_{\eta}^{+}(\lambda) = \chi_{\infty} \Big( \frac{|\lambda - \lambda_{0}|}{2\eta} \Big).
\end{align*}
{ }\\

\noindent {\bf Estimate of $ I_{0}^{-}(t,x)$.}
Notice that $\supp \chi_{\eta}^{-} \subset B(\lambda_{0} , \eta)$. There exist $0<\widetilde{\eta}_{1}\le\widetilde{\eta}_{2}$ such that $\mu \in B(0,\widetilde{\eta}_{1})$ implies $\lambda \in B(\lambda_{0} , \eta)$, and $\lambda \in B(\lambda_{0} , \eta)$ implies  $\mu \in B(0,\widetilde{\eta}_{2})$. By substituting $\psi( \lambda)  = |\mu|^{2} + \psi ( \lambda_{0})$, we get
\begin{align*}
I_{0}^{-}(t,x) = e^{it \psi (\widetilde{\lambda)}} \int_{\mathfrak{a}} \ d \mu \ \widetilde{a} (\lambda(\mu)) e^{it |\mu|^{2}}
\end{align*}
where the amplitude
\begin{align}\label{tilde a}
\widetilde{a} (\lambda(\mu)) = 
\chi_{\eta}^{-}(\lambda(\mu)) \ \chi_{0}^{\rho} (\lambda(\mu)) \pi (\lambda(\mu))^2 ( | \lambda(\mu)|^2 + \widetilde{\rho}^2)^{ - \frac{\sigma}{2}} \det \left[ J (\lambda(\mu)) \right]^{-1}
\end{align}
is smooth and compactly supported in $B(0,\widetilde{\eta}_{2})$. We deduce, from \eqref{estimate of M} and \eqref{estimate of J} that $\widetilde{a} (\lambda(\mu))$ is bounded, together with all its derivatives. Let $\widetilde{\chi} \in \mathcal{C}_{c}^{\infty}(\mathfrak{a})$ be a bump function such that $\widetilde{\chi}=1$ on $B(0,\widetilde{\eta}_{2} )$. Let $M\ge\frac{d}{2}$ be an integer. Then
\begin{align*}
I_{0}^{-}(t,x) =  
e^{ it \psi (\lambda_{0})} 
\int_{\mathfrak{a}} \ d \mu \ e^{it |\mu|^{2}} e^{- |\mu|^{2}} \big[ e^{|\mu|^{2}}  \widetilde{a} (\lambda(\mu))\big] \widetilde{\chi}(\mu),
\end{align*}
where the coefficients of the Taylor expansion of ${e^{|\mu|^{2}} \tilde{a}(\lambda(\mu))}$ at the origin: 
\begin{align*}
{e^{|\mu|^{2}} \tilde{a}(\lambda(\mu))} =
\sum_{|k|\le2M} c_k \mu^{k} + R_{2M}(\mu),
\end{align*}
satisfy 
\begin{align}\label{c_0}
\textstyle
|c_k| \lesssim |\pi(\lambda_{0})|^{2}\lesssim\big( \frac{|x|}{t} \big)^{d-\ell},
\end{align}
and the remainder 
\begin{align}\label{R_2M}
    |\nabla^{k} R_{2M}(\mu)| \lesssim |\mu|^{2M+1-k}
    \qquad \forall 0\le k\le2M+1,
\end{align}
according to \eqref{critical point}, \eqref{tilde a} and the fact that $\frac{|x|}{t}\le\frac{1}{2}$. By substituting this expansion in the above integral, $I_{0}^{-}(t,x)$ is equal to the sum of following three terms:
\begin{align*}
I_1 = \sum_{|k|\le2M} c_k \int_{\mathfrak{a}} d\mu \ e^{i t |\mu|^{2}}  e^{- |\mu|^{2}} \mu^{k}, \\
I_2 = \int_{\mathfrak{a}} d\mu \ e^{i t |\mu|^{2}}  R_{2M}(\mu) e^{- |\mu|^{2}} \tilde{\chi} (\mu),
\end{align*}
and
\begin{align*}
I_3  = \int_{\mathfrak{a}} d\mu \ e^{i t |\mu|^{2}} \Big( \sum_{|k|\le2M} c_k \mu^{k} \Big) e^{- |\mu|^{2}}  \Big(  \tilde{\chi} (\mu) - 1 \Big).
\end{align*}
{ }\\

In order to estimate $I_1$, we write the integral as a product
\begin{align*}
I_1 =& \sum_{|k|\le2M} c_k \prod_{j=1}^{\ell} 
\int_{- \infty}^{+ \infty} d \mu_{j} \ e^{it \mu_{j}^{2}} e^{- \mu_{j}^{2}} \mu_{j}^{k_{j}},
\end{align*}
where
\begin{align*}
    \int_{-\infty}^{+\infty} d\mu_{j} \ e^{-(1-it)\mu_{j}^{2}} \mu_{j}^{k_{j}} =0
\end{align*}
if $k_{j}$ is odd, while
\begin{align*}
    \int_{-\infty}^{+\infty} d\mu_{j} \ e^{-(1-it)\mu_{j}^{2}} \mu_{j}^{k_{j}} 
    = 2 (1-it)^{-\frac{k_{j}+1}{2}} \int_{0}^{+\infty} dz_{j} e^{-z_{j}^2} z_{j}^{k_{j}}
\end{align*}
by a change of contour if $k_{j}$ is even. From \eqref{c_0}, we obtain
\begin{align*}
\textstyle
    |I_1| \lesssim \big(\frac{|x|}{t}\big)^{d-\ell} t^{-\frac{\ell}{2}}
    = t^{-\frac{d}{2}} \big(\frac{|x|}{\sqrt{t}}\big)^{d-\ell}
    \lesssim t^{-\frac{d}{2}} |x|^{-\frac{d-\ell}{2}},
\end{align*}
since $\frac{|x|}{t}<\frac{1}{2}$. Next, we perform $M$ integrations by parts based on
\begin{align}\label{IBP I0}
\textstyle
    e^{i t |\mu|^{2}}
    = - \frac{i}{2t} \sum_{j=1}^{\ell} \frac{\mu_{j}}{|\mu|^2}
     \frac{\partial}{\partial \mu_{j}} e^{i t |\mu|^{2}}
\end{align}
and obtain
\begin{align*}
    I_{2} = O(t^{-M}),
\end{align*}
according to \eqref{R_2M}. Finally, as $ \mu \mapsto  \big( \sum_{|k|\le2M} c_k \mu^{k} \big) e^{- |\mu|^{2}}  \big(  \tilde{\chi} (\mu) - 1 \big)$ is exponentially decreasing and vanishes near the origin, we perform $N\ge\frac{d}{2}$ integrations by parts based on \eqref{IBP I0} again and obtain
\begin{align*}
    I_{3} = O(t^{-N}).
\end{align*}
By summing up the estimates of $I_1$, $I_2$ and $I_3$, we deduce that
\begin{align}\label{I0-}
| I_{0}^{-}(t,x) | \lesssim t^{-\frac{d}{2}} (1+|x|)^{\frac{d-\ell}{2}}.
\end{align}
{ }\\

\noindent {\bf Estimate of $ I_{0}^{+}(t,x)$.}
Since the phase $\psi$ has a unique critical point $\lambda_{0}$, then for all $\lambda \in \supp \chi_{\eta}^{+}$, $\nabla \psi (\lambda) \neq 0$. In order to get a large time decay, we estimate the oscillatory integral  $ I_{0}^{+}(t,x)$ by using several integrations by parts based on 
\begin{align}\label{int by parts}
e^{it \psi( \lambda)} = \frac{1}{it} 
\big( \widetilde{\psi}(\lambda) \big)^{-1}
\sum_{j=1}^{\ell} \lambda_{j} \frac{\partial}{\partial \lambda_{j}} e^{it \psi( \lambda)},
\end{align}
where
\begin{align*}
\textstyle
\widetilde{\psi}(\lambda) =
| \lambda |^2 ( | \lambda |^2 + | \rho |^2 )^{- \frac{1}{2}} + \big\langle {\frac{A}{t} , \lambda} \big\rangle
\end{align*}
is a nonzero symbol of order zero for all $\lambda \in \supp \chi_{\eta}^{+}$.
After performing $N$ such integrations by parts, $ I_{0}^{+}(t,x)$ becomes
\begin{align*}
I_{0}^{+}(t,x) = const. & (it)^{-N} \\ 
& \underbrace{\int_{\mathfrak{a}} \ d \lambda \ e^{it \psi (\lambda)} \Big\lbrace - \sum_{j=1}^{\ell} \frac{\partial}{\partial \lambda_{j}} \circ \frac{\lambda_{j}}{\widetilde{\psi}(\lambda) } \Big\rbrace^{N} \Big\lbrace \chi_{\eta}^{+}(\lambda) a(\lambda) \Big\rbrace}_{< + \infty},
\end{align*}
for every $N \in \mathbb{N}$. The last integral is finite since the amplitude $a$ is supported in $B(0,2|\rho|)$ thanks to the cut-off $\chi_{0}^{\rho}$. Hence
\begin{align}\label{I0+}
| I_{0}^{+}(t,x) | \lesssim t^{- N},
\end{align}
for every integer $N \ge 0$. According to \eqref{I0-} and \eqref{I0+}, we conclude that
\begin{align*}
|\omega_{t}^{\sigma,0} (x) | \lesssim | I_{0} (t,x) |
\int_{K} dk \ e^{- \langle \rho, A (xk) \rangle} \lesssim  t^{-\frac{d}{2}} |x|^{\frac{d-\ell}{2}} \varphi_{0} (x ).
\end{align*}
\end{proof}

\begin{remark}
In the proof of \cref{estimate omega 0}, we have replaced the  spherical function $\varphi_{\lambda}(x)$ by its general integral expression \eqref{spherical function noncompact}. Moreover, this proof also works without using the explicit expression \eqref{c function complex} of the density $| \mathbf{c} (\lambda) |^{-2}$. Thus we can derive similar estimates for any symmetric space $G/K$ by the same approach.
\end{remark}

\begin{theorem}\label{estimate omega infinity}
Let $\sigma \in \mathbb{C}$ with $\re \sigma = \frac{d+1}{2}$. The following pointwise estimates hold for the kernel $\widetilde{\omega}_{\infty}^{\sigma}(t, x)$, for all $t \in \mathbb{R}^{*}$ and $ x \in \mathfrak{a}$:
\begin{align*}
| \widetilde{\omega}_{t}^{\sigma, \infty}(x) | \lesssim
\begin{cases}
|t|^{-\frac{d-1}{2}} \varphi_{0}(x) \qquad & \textnormal{if}\quad |t| < 1, \\
|t|^{-N} (1+ |x|)^{N} \varphi_{0}(x) \qquad & \textnormal{if}\quad |t| \ge 1, 
\end{cases}
\end{align*}
for every $N \in \mathbb{N}$. 
\end{theorem}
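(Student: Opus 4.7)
The plan is to factor $\varphi_{0}(x)$ out via the complex-case representation \eqref{spherical function complex}, reduce $\widetilde{\omega}_{t}^{\sigma,\infty}$ to a Euclidean-type oscillatory integral on $\mathfrak{a}$, and then split the analysis according to whether $|t|$ is small or large, in the spirit of the proof of \cref{estimate omega 0}.

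\medskip
\noindent\textbf{Reduction.} Substituting \eqref{c function complex} and \eqref{spherical function complex} into the expression of $\widetilde{\omega}_{t}^{\sigma,\infty}$ and applying Fubini, we obtain
\begin{align*}
\widetilde{\omega}_{t}^{\sigma,\infty}(x)
= \text{const.}\;\varphi_{0}(x)\int_{K}dk\;J_{\infty}\bigl(t,(\ad k^{-1})\cdot x\bigr),
\end{align*}
where
\begin{align*}
J_{\infty}(t,v)
= \frac{e^{\sigma^{2}}}{\Gamma(\tfrac{d+1}{2}-\sigma)}
\int_{\mathfrak{a}} d\lambda\;\chi_{\infty}^{\rho}(\lambda)\,\pi(\lambda)^{2}\,(|\lambda|^{2}+\widetilde{\rho}^{2})^{-\sigma/2}\,
e^{i\Psi(\lambda)},
\qquad
\Psi(\lambda)=t\sqrt{|\lambda|^{2}+|\rho|^{2}}+\langle\lambda,v\rangle.
\end{align*}
Because $\ad k$ is an isometry, $|(\ad k^{-1})\cdot x|\le|x|$, so it is enough to bound $|J_{\infty}(t,v)|$ by $|t|^{-(d-1)/2}$ when $|t|<1$ and by $|t|^{-N}(1+|v|)^{N}$ when $|t|\ge 1$, uniformly in $v\in\mathfrak{p}$.

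\medskip
\noindent\textbf{Large time $|t|\ge 1$.} I would repeat the radial-derivative integration by parts used in \eqref{int by parts}, based on the symbol $\widetilde{\psi}(\lambda)=|\lambda|^{2}(|\lambda|^{2}+|\rho|^{2})^{-1/2}+\langle v/t,\lambda\rangle$. On $\supp\chi_{\infty}^{\rho}$ the first summand is bounded below by $|\rho|/\sqrt{2}$, so $\widetilde{\psi}$ vanishes only when $|v|/t$ is comparable to $1$; splitting $\mathfrak{a}$ into a region away from the zero set of $\widetilde{\psi}$ (where straightforward integration by parts produces any number of factors $|t|^{-1}$) and a thin neighbourhood of it (estimated by a localized stationary-phase argument), each step loses at most a factor $1+|v|$ from differentiating the symbol $\widetilde{\psi}^{-1}$ and the amplitude. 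Iterating $N$ times yields the desired bound $|t|^{-N}(1+|v|)^{N}$.

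\medskip
\noindent\textbf{Small time $|t|<1$.} The sharp rate $|t|^{-(d-1)/2}$ must be extracted carefully. I would pass to polar coordinates $\lambda=r\omega$ with $r>0$, $\omega\in S^{\ell-1}\subset\mathfrak{a}$, using $\pi(\lambda)^{2}=r^{d-\ell}\pi(\omega)^{2}$ and $d\lambda=r^{\ell-1}dr\,d\omega$, then treat the angular piece
\begin{align*}
A(r,v)=\int_{S^{\ell-1}}d\omega\;\pi(\omega)^{2}\,e^{ir\langle\omega,v\rangle}
\end{align*}
via the Bessel asymptotic \eqref{Bessel function asymp}: for $r|v|\gtrsim 1$, $A(r,v)$ splits into two terms of the form $(r|v|)^{-(\ell-1)/2}e^{\pm ir|v|}$ modulated by classical symbols of order zero in $r|v|$, plus a lower-order remainder; for $r|v|\lesssim 1$ the angular integral is bounded directly. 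Substituting this expansion back, the remaining one-dimensional oscillatory integral in $r$ has phase $\psi_{\pm}(r)=t\sqrt{r^{2}+|\rho|^{2}}\pm r|v|$ and radial amplitude of order $r^{(d-\ell-2)/2}$ at infinity once all powers are collected (using $\re\sigma=(d+1)/2$). Stationary phase in the radial variable, depending on whether $\psi_{\pm}'$ vanishes on $\supp\chi_{\infty}$, produces the factor $|t|^{-1/2}$, which combined with the $(r_{0}|v|)^{-(\ell-1)/2}$ from the Bessel expansion delivers the claimed decay $|t|^{-(d-1)/2}$; the factor $e^{\sigma^{2}}/\Gamma(\tfrac{d+1}{2}-\sigma)$ is what cancels the logarithmic divergence precisely at the endpoint $\sigma=(d+1)/2$, allowing the estimate to close.

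\medskip
\noindent\textbf{Main obstacle.} The principal difficulty is the small-time analysis: one has to track simultaneously the Bessel expansion of the angular integral, the polynomial $\pi(\omega)^{2}$ of degree $|\Sigma^{+}|=(d-\ell)/2$, the radial stationary phase, and the regularizing role of the Gamma factor at $\re\sigma=(d+1)/2$. Once the bookkeeping of powers of $r$ and $|v|$ is done correctly, the rest reduces to standard one-dimensional oscillatory integral estimates. The large-time case is more routine but requires the regime split described above since $\Psi$ can have a critical point inside $\supp\chi_{\infty}^{\rho}$ whenever $|v|\gtrsim|t|$.
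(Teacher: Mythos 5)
Your reduction is valid, but it departs from the paper's at a structurally important point, and that departure is where the argument breaks down. The paper does not keep the $\ell$-dimensional integral over $\mathfrak{a}$ weighted by $\pi(\lambda)^{2}$ with the $K$-integral outside; in the regime $|x|/t\ge\frac12$ it recombines $\int_{K}dk\int_{\mathfrak{a}}d\lambda\,\pi(\lambda)^{2}(\cdots)$ into a single Euclidean Fourier integral over $\mathfrak{p}$ (dimension $d$). This is the whole point of the complex case: the angular integral is then over $S^{d-1}$, produces $J_{(d-2)/2}(r|x|)\sim(r|x|)^{-1/2}$, and the radial amplitude collapses to exactly $r^{-1-i\im\sigma}$ at $\re\sigma=\frac{d+1}{2}$ — a borderline, conditionally convergent integral that the two-regime integration by parts (in $\xi=t\pm|x|$) can handle with constant $|\im\sigma|+|\im\sigma|^{-1}$. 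In your version the sphere is $S^{\ell-1}\subset\mathfrak{a}$, the Bessel decay is only $(r|v|)^{-(\ell-1)/2}$, and your own power count gives a radial amplitude $r^{(d-\ell-2)/2}$, which \emph{grows} (since $d\ge\ell+2$). Near the light cone, where the effective frequency $t\pm|v|$ tends to $0$ and the radial stationary point $r_{0}$ escapes to infinity with $\psi''_{-}(r_{0})\to0$, neither integration by parts nor stationary phase as you describe them controls a growing amplitude; the claim that $|t|^{-1/2}$ times $(r_{0}|v|)^{-(\ell-1)/2}$ ``delivers'' $|t|^{-(d-1)/2}$ is asserted, not verified, and I do not see how to close it. Note also that $v=(\ad k^{-1})\cdot x$ only enters through its projection onto $\mathfrak{a}$, which can be arbitrarily small for some $k$ even when $|x|$ is large, so the degenerate regimes below occur for every $x$.

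The second, independent gap is the small-$|v|$ (equivalently the paper's $|x|/t\le\frac12$) regime at small time. There your treatment is ``for $r|v|\lesssim1$ the angular integral is bounded directly'', but the resulting radial integral $\int_{|\rho|}^{1/|v|}r^{(d-3)/2}\,dr\sim|v|^{-(d-1)/2}$ is not bounded by $t^{-(d-1)/2}$ when $|v|\ll t$ (and diverges at $v=0$). One must use the oscillation of $e^{it\sqrt{r^{2}+|\rho|^{2}}}$, yet naive repeated integration by parts requires $M>\frac{d-1}{2}$ steps for convergence and yields $t^{-M}$, which overshoots the target $t^{-(d-1)/2}$ for $0<t<1$. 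The missing idea — and the key device in the paper's proof of this case — is the additional frequency splitting at $|\lambda|\sim1/t$ via $\chi_{0}^{\rho}(t\lambda)+\chi_{\infty}^{\rho}(t\lambda)$: the low-frequency piece is estimated by absolute values, $\int_{|\rho|\le|\lambda|\le2|\rho|t^{-1}}|\lambda|^{2|\Sigma^{+}|-\frac{d+1}{2}}\,d\lambda\lesssim t^{-\frac{d-1}{2}}$, while on the high-frequency piece the integration by parts now gains exactly $t^{-\frac{d-1}{2}}$ because the domain of integration starts at $|\rho|/t$. Your proposal contains neither this splitting nor any substitute for it, so the stated small-time bound is not established. (Your large-time discussion is closer to workable, since $(1+|x|)^{N}$ absorbs the light-cone region where $|x|\asymp t$, but the ``thin neighbourhood of the zero set of $\widetilde{\psi}$'' still hides the same growing-amplitude issue.)
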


\begin{remark}
Similar estimates hold in the easier case $\re \sigma > \frac{d+1}{2}$.  By symmetry we assume again that $t >0$. We divide the proof into two parts, depending whether $\frac{|x|}{t} \ge \frac{1}{2}$ or not.
\end{remark}

\begin{proof}[Proof for $\frac{|x|}{t} \ge \frac{1}{2}$]
Substituting the integral representation \eqref{spherical function complex} of $\varphi_{\lambda}$ allows us to write the kernel as
\begin{align*}
\widetilde{\omega}_{t}^{\sigma, \infty}(x)
=  const. \ & \frac{e^{\sigma^2}}{\Gamma ((d+1)/2- \sigma)} \ \varphi_{0} (x) \\
& \underbrace{\int_{\mathfrak{p}} \ d \lambda \  \chi_{\infty}^{\rho} (\lambda)  ( | \lambda |^2 + \widetilde{\rho}^2)^{ - \frac{\sigma}{2}} e^{it \sqrt{| \lambda|^2 + |\rho|^2}} e^{- i \left\langle {x, \lambda} \right\rangle}}_{ = I_{\infty} (t,x) }.
\end{align*}
We split up $I_{\infty} (t,x) $ into two parts
\begin{align*}
I_{\infty} (t,x)
=& \overbrace{\int_{\mathfrak{p}} \ d \lambda \  \chi_{0}^{\rho} (t \lambda)  \chi_{\infty}^{\rho} (\lambda)  ( | \lambda |^2 + \widetilde{\rho}^2)^{ - \frac{\sigma}{2}} e^{it \sqrt{| \lambda|^2 + |\rho|^2}} e^{- i \left\langle {x, \lambda} \right\rangle}}^{I_{\infty}^{-} (t,x)} \\
&+ \underbrace{\int_{\mathfrak{p}} \ d \lambda \  \chi_{\infty}^{\rho} (t \lambda) \chi_{\infty}^{\rho} (\lambda)  ( | \lambda |^2 + \widetilde{\rho}^2)^{ - \frac{\sigma}{2}} e^{it \sqrt{| \lambda|^2 + |\rho|^2}} e^{- i \left\langle {x, \lambda} \right\rangle}}_{ I_{\infty}^{+} (t,x) },
\end{align*}
by using the smooth radial cut-off functions $\chi_{0}^{\rho} (t \lambda)$ and $\chi_{\infty}^{\rho} (t \lambda)$. The first integral $I_{\infty}^{-} (t,x)$ vanishes if $t\ge2$ and is easily estimated as follows, if $0<t<2$ :
\begin{align*}
| I_{\infty}^{-} (t,x) | 
\lesssim& \ \int_{\mathfrak{p}} \ d \lambda \  \chi_{0}^{\rho} (t \lambda)  \chi_{\infty}^{\rho} (\lambda)  | \lambda |^{- \re \sigma} \\
\le& \ t^{- \frac{d-1}{2}}  \int_{\mathfrak{p}} \ d \lambda \  \chi_{0}^{\rho} ( \lambda)   |\lambda|^{- \frac{d+1}{2}} \lesssim t^{- \frac{d-1}{2}} 
\end{align*}
for all $ \sigma \in \mathbb{C}$ with $\re \sigma =  \frac{d+1}{2}$. In order to study the second integral $I_{\infty}^{+} (t,x)$, we introduce polar coordinates and rewrite
\begin{align}\label{I inf +}
I_{\infty}^{+} (t,x) =
\int_{0}^{+\infty} dr \ \chi_{\infty}^{\rho} (t r) \chi_{\infty}^{\rho} (r) ( r^2 + \widetilde{\rho}^2)^{ - \frac{\sigma}{2}} e^{it \sqrt{r^2 + |\rho|^2}} \int_{|\lambda| =r} d \sigma(\lambda) \ e^{- i \left\langle {x, \lambda} \right\rangle}.
\end{align}
Here the inner integral is a modified Bessel function. Specifically
\begin{align}\label{integral on the shell}
\int_{|\lambda| =r} d \sigma(\lambda) \ e^{- i \left\langle {x, \lambda} \right\rangle} = r^{\frac{d}{2}} |x|^{\frac{2-d}{2}} J_{\frac{d-2}{2}} (r|x|),
\end{align}
where $J_{(d-2)/2}$ denotes the classical Bessel function of the first kind. Since $I_{\infty}^{+} (t,x)$ vanishes if $tr<|\rho|$, we may restrict ourselves to $tr\ge|\rho|$, hence $r|x|\ge\frac{rt}{2}\ge\frac{|\rho|}{2}$. Then from \eqref{Bessel function asymp}, there exist constants $C_{d}^{+}$ and $C_{d}^{-}$ such that
\begin{align}\label{Bessel function}
J_{\frac{d-2}{2}} (r|x|) = C_{d}^{+} (r|x|)^{-1/2} e^{ir|x|} + C_{d}^{-} (r|x|)^{-1/2}e^{-ir|x|} + R(r|x|),
\end{align}
where the remainder $R(r|x|)$ is $O \big( (r|x|)^{-\frac{3}{2}}\big)$. Substituting \eqref{integral on the shell} and \eqref{Bessel function} in \eqref{I inf +} implies
\begin{align*}
I_{\infty}^{+} (t,x) 
=& I_{\infty}^{+,1} (t,x) + I_{\infty}^{+,2} (t,x) + I_{\infty}^{+,3} (t,x) \\
=& C_{d}^{+}  |x|^{-\frac{d-1}{2}} \int_{0}^{+\infty} dr \ \chi_{\infty}^{\rho} (t r) \chi_{\infty}^{\rho}(r) ( r^2 + \widetilde{\rho}^2)^{ - \frac{\sigma}{2}} r^{\frac{d-1}{2}} e^{it \sqrt{r^2 + |\rho|^2}} e^{ i|x| r} \\
& + C_{d}^{-} |x|^{-\frac{d-1}{2}} \int_{0}^{+\infty} dr \ \chi_{\infty}^{\rho} (t r) \chi_{\infty}^{\rho}(r) ( r^2 + \widetilde{\rho}^2)^{ - \frac{\sigma}{2}} r^{\frac{d-1}{2}} e^{it \sqrt{r^2 + |\rho|^2}} e^{- i|x| r} \\
& + |x|^{-\frac{d-2}{2}} \int_{0}^{+\infty} dr \ \chi_{\infty}^{\rho} (t r) \chi_{\infty}^{\rho}(r) ( r^2 + \widetilde{\rho}^2)^{ - \frac{\sigma}{2}} r^{\frac{d}{2}} e^{it \sqrt{r^2 + |\rho|^2}}  R(r|x|).
\end{align*}
On the one hand, the last integral is estimated as follows:
\begin{align*}
| I_{\infty}^{+,3} (t,x)  | \lesssim
|x|^{-\frac{d+1}{2}} \int_{0}^{+\infty} dr \ \chi_{\infty}^{\rho} (t r) \chi_{\infty}^{\rho}(r) \ r^{-2},
\end{align*}
for all $ \sigma \in \mathbb{C}$ with $\re \sigma =  \frac{d+1}{2}$. Hence
\begin{align}\label{I+3 t>1}
| I_{\infty}^{+,3} (t,x)  | \lesssim
|x|^{-\frac{d+1}{2}} \int_{|\rho|}^{+\infty} dr \ r^{-2} \lesssim |x|^{-\frac{d+1}{2}}
\end{align}
for large time and
\begin{align}\label{I+3 t<1}
| I_{\infty}^{+,3} (t,x)  | \lesssim
|x|^{-\frac{d+1}{2}} \int_{|\rho| t^{-1}}^{+\infty} dr \ r^{-2}
\lesssim t |x|^{-\frac{d+1}{2}} \lesssim |x|^{-\frac{d-1}{2}}
\end{align}
for small time.\\

On the other hand, in order to estimate $I_{\infty}^{+,1} (t,x)$ and $I_{\infty}^{+,2} (t,x)$, it is sufficient to study the integral
\begin{align*}
\widetilde{I}_{\infty}^{+} (t,x) = \int_{0}^{+\infty} dr \ \chi_{\infty}^{\rho}(r) \chi_{\infty}^{\rho} (t r)  ( r^2 + \widetilde{\rho}^2)^{ - \frac{\sigma}{2}} r^{\frac{d-1}{2}} e^{it \sqrt{r^2 + |\rho|^2}} e^{\pm i|x| r}.
\end{align*}
Notice that
\begin{align}\label{prod of cutoff}
 \chi_{\infty}^{\rho} (r) \chi_{\infty}^{\rho}(tr) =
 \begin{cases}
 \chi_{\infty}^{\rho} (r) \qquad & \textnormal{if}\quad t \ge 2, \\
\chi_{\infty}^{\rho}(tr) \qquad & \textnormal{if}\quad t \le \frac{1}{2}.
 \end{cases}
\end{align}
We split up $\widetilde{I}_{\infty}^{+} (t,x)$ into three parts, corresponding to the decomposition
\begin{align*}
\widetilde{I}_{\infty}^{+} (t,x) = \widetilde{I}_{\infty}^{+,1} (t,x)  + \widetilde{I}_{\infty}^{+,2} (t,x)  + \widetilde{I}_{\infty}^{+,3} (t,x) 
\end{align*}
where
\begin{align*}
\widetilde{I}_{\infty}^{+,1} (t,x) =  \int_{0}^{+\infty}dr \ \chi_{\infty}^{\rho}  (r) \chi_{\infty}^{\rho}(tr) ( r^2 + \widetilde{\rho}^2)^{ - \frac{\sigma}{2}} r^{\frac{d-1}{2}} \big\lbrace e^{it \sqrt{r^2 + |\rho|^2}} - e^{ it r} \big\rbrace e^{\pm i|x| r},
\end{align*}
\begin{align*}
\widetilde{I}_{\infty}^{+,2} (t,x) =  \int_{0}^{+\infty} dr \ \chi_{\infty}^{\rho} (r) \chi_{\infty}^{\rho}(tr) r^{\frac{d-1}{2}} \big\lbrace  ( r^2 + \widetilde{\rho}^2)^{ - \frac{\sigma}{2}} -r^{- \sigma} \big\rbrace e^{i (t \pm |x|) r},
\end{align*}
and
\begin{align*}
\widetilde{I}_{\infty}^{+,3} (t,x) = \int_{0}^{+\infty} dr \ \chi_{\infty}^{\rho} (r) \chi_{\infty}^{\rho}(tr) \ r^{-1-i \im \sigma} e^{i (t \pm |x|) r},
\end{align*}
for all $\sigma \in \mathbb{C}$ with $\re \sigma = \frac{d+1}{2}$.\\

\noindent {\bf Estimates of $\widetilde{I}_{\infty}^{+,1}$ and $\widetilde{I}_{\infty}^{+,2}$.} The first two parts are easily estimated. By using
\begin{align*}
\textstyle
e^{it \sqrt{r^2 + |\rho|^2}} - e^{ it r} = O ( \frac{t}{r} )
\qquad \textnormal{and} \qquad
 ( r^2 + \widetilde{\rho}^2)^{ - \frac{\sigma}{2}} -r^{- \sigma} = O (r^{- \re \sigma - 2})
\end{align*}
for all $r \in \supp \chi_{\infty}^{\rho}$, we obtain
\begin{align}\label{estimate of tilde I+1 and I+2}
\widetilde{I}_{\infty}^{+,1}(t,x) = O(t)
\qquad \textnormal{and} \qquad
\widetilde{I}_{\infty}^{+,2}(t,x) = O(1)
\end{align}
for all $\sigma \in \mathbb{C}$ with $\re \sigma = \frac{d+1}{2}$.\\

\noindent {\bf Estimate of $\widetilde{I}_{\infty}^{+,3}$.} 
 We claim that 
\begin{align}\label{estimate of tilde I+3}
|\widetilde{I}_{\infty}^{+,3} (t,x) | \lesssim 
\underbrace{|\im \sigma | + \frac{1}{|\im \sigma |}}_{C_{\sigma}}.
\end{align}
Denoting by $\xi = t \pm |x|$, we divide our estimate into two parts, depending whether $| \xi | \ge \frac{1}{2 | \rho |}$ or not. \\

\begin{itemize}[leftmargin=*]
\item If $| \xi | \ge \frac{1}{2 | \rho |}$, we perform an integration by parts based on $e^{i \xi r} = \frac{1}{i \xi} \frac{\partial}{\partial r} e^{i \xi r}$. Then
\begin{align}\label{case 2rhoxi>1}
\widetilde{I}_{\infty}^{+,3} (t,x) = \frac{i}{\xi}  \ \int_{0}^{+ \infty} dr \ e^{i \xi r} \ \frac{\partial}{\partial r} \big\lbrace \chi_{\infty}^{\rho} (r) \chi_{\infty}^{\rho}(tr) \ r^{-1-i \im \sigma}  \big\rbrace.
\end{align}
If the derivative hits either $\chi_{\infty}^{\rho} (r)$ or $r^{-1-i \im \sigma}$, then the corresponding contributions to \eqref{case 2rhoxi>1} are bounded by
\begin{align*}
    C(1+|\im\sigma|).
\end{align*}
If the derivative hits $\chi_{\infty}^{\rho}(tr)$, which matters only in the case $t<2$, according to \eqref{prod of cutoff}, then the corresponding contribution to \eqref{case 2rhoxi>1} is bounded by
\begin{align*}
Ct \int_{| \rho| t^{-1}}^{2| \rho| t^{-1}} dr \ r^{-1} < 2(\ln2)C.
\end{align*}
Hence $\widetilde{I}_{\infty}^{+,3} (t,x) = O (1 + | \im \sigma| )$ for all $t >0$ and $x \in \mathfrak{a}$ satisfying $\frac{|x|}{t} \ge \frac{1}{2}$ and $\big| t \pm |x| \big| \ge \frac{1}{2 | \rho |}$.\\

\item If $|\xi| < \frac{1}{2 | \rho |}$, we split up
\begin{align*}
\widetilde{I}_{\infty}^{+,3} (t,x) = \big( \int_{0}^{\frac{1}{| \xi |}} + \int_{\frac{1}{| \xi |}}^{+ \infty} \big) \ dr \ \chi_{\infty}^{\rho} (r) \chi_{\infty}^{\rho}(tr) \ r^{-1-i \im \sigma} e^{i \xi r},
\end{align*}
We estimate both terms by performing different integration by parts. On the one hand,
\begin{align*}
& \int_{0}^{\frac{1}{| \xi |}} dr \ r^{-1-i \im \sigma}  \chi_{\infty}^{\rho} (r) \chi_{\infty}^{\rho}(tr) \ e^{i \xi r} \\
=& \frac{i}{\im \sigma} \ \underbrace{r^{- i \im \sigma} \chi_{\infty}^{\rho} (r) \chi_{\infty}^{\rho}(tr) e^{i \xi r} \big|_{r=0}^{r= \frac{1}{| \xi |}}}_{ = \ O(1) } \\
& \qquad  \qquad \qquad \qquad -  \frac{i}{\im \sigma} \underbrace{\int_{0}^{\frac{1}{| \xi |}} dr \ r^{-i \im \sigma}  \frac{\partial}{\partial r} \big\lbrace \chi_{\infty}^{\rho} (r) \chi_{\infty}^{\rho}(tr) e^{i \xi r} \big\rbrace}_{= \ O(1) \quad (*)} \\
=& O\big( \frac{1}{| \im \sigma |} \big).
\end{align*}
On the other hand,
\begin{align*}
& \int_{\frac{1}{|\xi|}}^{+ \infty} dr \ e^{i \xi r} \chi_{\infty}^{\rho} (r) \chi_{\infty}^{\rho}(tr) \ r^{-1-i \im \sigma}  \\
=& \underbrace{\frac{1}{i\xi} e^{i \xi r} \chi_{\infty}^{\rho} (r) \chi_{\infty}^{\rho}(tr) \ r^{- 1 - i \im \sigma} \big|_{r= \frac{1}{| \xi |}}^{r = + \infty}}_{ = \ O(1) } \\
& \qquad  \qquad \qquad \qquad +  \underbrace{ \frac{i}{\xi} \int_{\frac{1}{| \xi |}}^{+ \infty} dr \ e^{i \xi r}  \frac{\partial}{\partial r} \big\lbrace \chi_{\infty}^{\rho} (r) \chi_{\infty}^{\rho}(tr) \ r^{- 1 - i \im \sigma} \big\rbrace}_{= \ O(1 + | \im \sigma |) \quad (**)} \\
=& O(1 + | \im \sigma |).
\end{align*}
Let us explain the estimates $(*)$ and $(**)$. In both cases, the derivative $\frac{\partial}{\partial r}$ produces $3$ or $2$ terms, depending whether $t \in (\frac{1}{2},2)$ or not. The contributions of $\frac{\partial}{\partial r} e^{i \xi r} = i \xi e^{i \xi r}$ to $(*)$ and of $\frac{\partial}{\partial r} r^{-1 - i \im \sigma} = - (1 + i \im \sigma) r^{-2 - i \im \sigma}$ to $(**)$ are easily dealt with. The contribution of $\frac{\partial}{\partial r} \chi_{\infty}^{\rho}(r)$, whose support is contained in the interval $\left[ {|\rho|, 2 |\rho|} \right]$, to $(*)$ is obviously bounded, while its contribution to $(**)$ vanishes, as $\frac{1}{| \xi |} > 2 | \rho |$. 

Consider finally the contributions of $\frac{\partial}{\partial r}  \chi_{\infty}^{\rho}(tr)  = t (  \chi_{\infty}^{\rho})' (tr)$ when $t$ is small. Regarding $(*)$, it is easily estimated by $t \int_{| \rho | t^{-1}}^{2 | \rho | t^{-1}} dr = | \rho |$. Regarding $(**)$, notice that it vanishes unless $|\xi| \ge \frac{t}{2| \rho|}$. In this case, it is estimated by 
\begin{align*}
\frac{t}{|\xi| } \int_{| \rho | t^{-1}}^{2 | \rho | t^{-1}} dr r^{-1} \le 2|\rho| \ln 2.
\end{align*}
This concludes the estimate of \eqref{estimate of tilde I+3}.\\
\end{itemize}

We deduce from \eqref{estimate of tilde I+1 and I+2} and \eqref{estimate of tilde I+3} that
\begin{align*}
\widetilde{I}_{\infty}^{+} (t,x) = O (t +C_{\sigma}).
\end{align*}
By combining this estimate with \eqref{I+3 t>1} and  \eqref{I+3 t<1}, we get next
\begin{align*}
    |I_{\infty}^{+}| \lesssim C_{\sigma}|x|^{-\frac{d-1}{2}}
    \lesssim C_{\sigma}t^{-\frac{d-1}{2}},
\end{align*}
if $t$ is small and
\begin{align*}
    |I_{\infty}^{+}| \lesssim C_{\sigma} (t|x|^{-\frac{d-1}{2}} +|x|^{-\frac{d+1}{2}})
    \lesssim  C_{\sigma} t^{-N} (1+|x|)^{N-\frac{d-3}{2}},
\end{align*}
if $t$ is large, for any $N\in\mathbb{N}$. Together with the estimate of $I_{\infty}^{-}$, this allows us to conclude the proof of \cref{estimate omega infinity} in the case $\frac{|x|}{t}\ge\frac{1}{2}$.
\end{proof}

\begin{remark}
This proof relies on the expression \eqref{spherical function complex} of spherical functions, which is available if $G$ is complex but not in general.\\
\end{remark}

\begin{proof}[Proof for $\frac{|x|}{t} \le \frac{1}{2}$]
Substituting the expressions \eqref{spherical function noncompact} of $\varphi_{\lambda}$ and \eqref{c function complex} of  $ | \mathbf{c} ( \lambda)|^{-2} $ implies
\begin{align*}
\widetilde{\omega}_{t}^{\sigma, \infty}(x) = const. \  \frac{e^{\sigma^2}}{\Gamma ((d+1)/2- \sigma)} & \int_{K} dk \ e^{ \langle \rho, A (xk) \rangle} \\
& \underbrace{\int_{\mathfrak{a}} \ d \lambda \ \chi_{\infty}^{\rho} (\lambda) \ \pi^2(\lambda) \ ( | \lambda |^2 + \widetilde{\rho}^2)^{ - \frac{\sigma}{2}} e^{it \psi (\lambda)}}_{ = \widetilde{II}_{\infty} (t,x)},
\end{align*}
where the phase $\psi$ is defined as $\eqref{Sect 3. phase}$.
Since $\frac{| A (xk)| }{t} \le \frac{|x|}{t} \le \frac{1}{2}$, we know from \eqref{critical point} that the critical point $\lambda_{0}$ of $\psi$ satisfies $ | \lambda_{0}| < \frac{|\rho|}{\sqrt{3}}$. Hence $\psi$ has no critical point in the support of $\chi_{\infty}^{\rho}$, and $\widetilde{II}_{\infty} (t,x)$ makes sense after several integrations by parts based on \eqref{int by parts}.\\

Consider first the large time case where $t \ge 1$. After performing $N$ integrations by parts based on \eqref{int by parts}, $\widetilde{II}_{\infty} (t,x)$ becomes
\begin{align*}
\widetilde{II}_{\infty} (t,x) = &
\ const. (it)^{-N} \\
& \int_{\mathfrak{a}} d\lambda \ e^{it \psi (\lambda)} \Big\lbrace -  \sum_{j=1}^{\ell} \frac{\partial}{\partial \lambda_{j}} \circ \frac{\lambda_{j}}{\widetilde{\psi}(\lambda)} \Big\rbrace^{N} \Big\lbrace \chi_{\infty}^{\rho} (\lambda) \ \pi^2(\lambda) \ ( | \lambda |^2 + \widetilde{\rho}^2)^{ - \frac{\sigma}{2}} \Big\rbrace.
\end{align*}
If some derivatives hit $\chi_{\infty}^{\rho} (\lambda)$, then the range of integration reduces to a spherical shell where $|\lambda| \asymp | \rho |$ and we obtain the bound $O(t^{-N})$. Otherwise, since (in the support of $\chi_{\infty}^{\rho}$) $\lambda_{j} \big( \widetilde{\psi}(\lambda) \big)^{-1}$, $\pi^2(\lambda)$ and $( | \lambda |^2 + \widetilde{\rho}^2)^{ - \frac{\sigma}{2}}$ are inhomogeneous symbols of order $0$, $2|\Sigma^{+}$ and $-\re\sigma=-\frac{d+1}{2}$ respectively, we have
\begin{align*}
| \widetilde{II}_{\infty} (t,x) | \lesssim t^{-N} 
\int_{\mathfrak{a}} d \lambda \  \chi_{\infty} (\lambda) |\lambda|^{2 | \Sigma^{+}| - \re \sigma - N} \lesssim  t^{-N} 
\end{align*}
provided that $N > \re \sigma =\frac{d+1}{2}$. Hence, for all $t \ge 1$ such that $ \frac{|x|}{t} \le \frac{1}{2}$,
\begin{align*}
| \widetilde{\omega}_{t}^{\sigma, \infty}(x) | \lesssim \
t^{-N}  \int_{K} dk \ e^{ \langle \rho, A (xk) \rangle} = 
t^{-N} \varphi_{0}(x), 
\end{align*}
for all $\sigma \in \mathbb{C}$ with $\re \sigma =\frac{d+1}{2}$, and for every integer $N > \frac{d+1}{2}$.\\

Notice that the same argument implies similar estimates in small time with $t^{- \frac{d+1}{2}}$, where the negative power is too large. In order to reduce it to $t^{- \frac{d-1}{2}}$, we split up $\widetilde{II}_{\infty} (t,x)$ into two parts
\begin{align*}
\widetilde{II}_{\infty} (t,x) =& \
\widetilde{II}_{\infty}^{-} (t,x) + \widetilde{II}_{\infty}^{+} (t,x) \\
=& 
\int_{\mathfrak{a}} \ d \lambda \ \chi_{0}^{\rho}  ( t \lambda) \chi_{\infty}^{\rho}  (\lambda) \ \pi^2(\lambda) \ ( | \lambda |^2 + \widetilde{\rho}^2)^{ - \frac{\sigma}{2}} e^{it \psi (\lambda)} \\
& +  \int_{\mathfrak{a}} \ d \lambda \ \chi_{\infty}^{\rho}  ( t \lambda) \chi_{\infty}^{\rho}  (\lambda) \ \pi^2(\lambda) \ ( | \lambda |^2 + \widetilde{\rho}^2)^{ - \frac{\sigma}{2}} e^{it \psi (\lambda)},
\end{align*}
where $\widetilde{II}_{\infty}^{-} (t,x)$ is easily estimated:
\begin{align}\label{I tilde inf -}
| \widetilde{II}_{\infty}^{-} (t,x) | \lesssim 
\int_{|\rho| \le | \lambda | \le 2 | \rho | t^{-1}} \ d \lambda \ | \lambda |^{2 | \Sigma^{+} | -   \re \sigma} \lesssim t^{- 2 | \Sigma^{+} | +  \re \sigma - \ell} = t^{- \frac{d-1}{2}},
\end{align}
for all $0<t<1$ and for all $\sigma \in \mathbb{C}$ with $\re \sigma = \frac{d+1}{2}$. After $M$ integrations by parts based on \eqref{int by parts}, $\widetilde{II}_{\infty}^{+} (t,x)$ becomes
\begin{align*}
\widetilde{II}_{\infty}^{+}  (t,x) =&
\ const. (it)^{-M} \\
& \int_{\mathfrak{a}} d\lambda \ e^{it \psi (\lambda)} \Big\lbrace - \sum_{j=1}^{\ell} \frac{\partial}{\partial \lambda_{j}} \circ \frac{\lambda_{j}}{\widetilde{\psi}(\lambda)} \Big\rbrace^{M} \Big\lbrace \chi_{\infty}^{\rho}  ( t \lambda) \chi_{\infty}^{\rho}  (\lambda) \ \pi^2(\lambda) \ ( | \lambda |^2 + \widetilde{\rho}^2)^{ - \frac{\sigma}{2}} \Big\rbrace.
\end{align*}
Hence 
\begin{align*}
| \widetilde{II}_{\infty}^{+}  (t,x) | \lesssim& \ t^{-M} 
\int_{| \lambda | \ge | \rho | t^{-1}} d\lambda \ | \lambda |^{2 | \Sigma^{+} | -   \re \sigma- M} \\
\lesssim& \ t^{-M} \int_{r \ge  | \rho | t^{-1}} \frac{dr}{r} \ r^{2 | \Sigma^{+} | -   \re \sigma- M +\ell}.
\end{align*}
Therefore, for all $\sigma \in \mathbb{C}$ with $\re \sigma = \frac{d+1}{2}$, we have
\begin{align}\label{I tilde inf +}
| \widetilde{II}_{\infty}^{+}  (t,x) | \lesssim& \ t^{-M} t^{ - d + \re \sigma + M}
= t^{- \frac{d-1}{2}}
\end{align}
provided that $M > \frac{d-1}{2}$. From \eqref{I tilde inf -} and \eqref{I tilde inf +}, we deduce that
\begin{align*}
|\widetilde{\omega}_{t}^{\sigma, \infty}(x) | \lesssim \
t^{- \frac{d-1}{2}}  \int_{K} dk \ e^{ \langle \rho, A (xk) \rangle} = 
t^{- \frac{d-1}{2}}  \varphi_{0}(x), 
\end{align*}
for all $0 < t <1$ such that $\frac{|x|}{t} \le \frac{1}{2}$ and for all $\sigma \in \mathbb{C}$ with $\re \sigma = \frac{d+1}{2}$. Thus we have proved the last case, and the proof of \cref{estimate omega infinity} is complete.
\end{proof}

\begin{remark}
In the last proof, we have used the integral expression \eqref{spherical function noncompact} of the spherical function $\varphi_{\lambda}(x)$, which holds in general, and the particular expression \eqref{c function complex} of the Plancherel density $| \mathbf{c} (\lambda) |^{-2}$ when $G$ is complex. In general, $| \mathbf{c} (\lambda) |^{-2}$ is not a polynomial, nor even an inhomogeneous symbol of order $d-\ell$, which is known to be a major difficulty in higher rank analysis.
\end{remark}

\section{Dispersive estimates}\label{section dispersive}
In this section, we prove our main result about the $L^{q'} \rightarrow L^{q}$ estimates for the operator  $W_{t}^{\sigma} = \widetilde{D}^{-\sigma} e^{it \sqrt{- \Delta}}$. We introduce the following criterion based on the Kunze-Stein phenomenon, which is a straightforward generalization of \cite[Theorem 4.2]{APV2011} and which is crucial for our dispersive estimates.

\begin{lemma}\label{Kunze-Stein}
Let $\kappa$ be a $K$-bi-invariant measurable function on $G$. Then
\begin{align*}
\| \cdot * \kappa \|_{L^{q'}(X) \rightarrow L^{q}(X)} \lesssim \Big\lbrace \int_{G} dx \ \varphi_{0}(x) | \kappa(x) |^{\frac{q}{2}}  \Big\rbrace^{\frac{2}{q}}
\end{align*}
for any $q \in [ 2, + \infty )$.
In the limit case $q=\infty$,
\begin{align*}
\textstyle
    \|\cdot*\,\kappa\,\|_{L^{1}(\mathbb{X})
                    \rightarrow
                    L^{\infty}(\mathbb{X})}
    = \sup_{x\in{G}} |\kappa(x)|.
\end{align*}
\end{lemma}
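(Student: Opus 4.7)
The plan is to prove the two endpoint estimates ($q = 2$ and $q = \infty$) directly and recover the intermediate cases by Stein's interpolation. For $q = \infty$ (with $q' = 1$), the formula $T_\kappa f(x) = \int_G \kappa(y^{-1}x)f(y)\,dy$ immediately gives $\|T_\kappa f\|_{L^\infty} \le \|\kappa\|_{L^\infty}\|f\|_{L^1}$; testing on approximate identities concentrated near a near-maximizer of $|\kappa|$ yields the matching lower bound, hence the stated equality.

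For $q = 2$, I would use the spherical Fourier transform: since $\kappa$ is $K$-biinvariant, $T_\kappa$ is diagonalised by $\mathcal{H}$, and by Plancherel $\|T_\kappa\|_{L^2 \to L^2} = \|\mathcal{H}\kappa\|_{L^\infty(\mathfrak{a})^W}$. The basic estimate \eqref{basic spherical function estimate} then yields
\[
    |\mathcal{H}\kappa(\lambda)|
    = \Big|\int_G \kappa(x)\varphi_{-\lambda}(x)\,dx\Big|
    \le \int_G |\kappa(x)|\varphi_0(x)\,dx,
\]
which is exactly the bound claimed at $q = 2$.

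For $2 < q < \infty$, I would apply Stein's analytic interpolation theorem to the family of convolution operators $T_{\kappa_z}f = f * \kappa_z$, where
\[
    \kappa_z(x) = |\kappa(x)|^{(1-z)q/2}\operatorname{sgn}\kappa(x),
    \qquad 0 \le \re z \le 1.
\]
On $\re z = 0$ we have $|\kappa_z| = |\kappa|^{q/2}$, so the $q = 2$ case gives $\|T_{\kappa_z}\|_{L^2 \to L^2} \le \int_G |\kappa|^{q/2}\varphi_0$. On $\re z = 1$ we have $|\kappa_z| \le \mathbf{1}_{\supp\kappa}$, hence $\|T_{\kappa_z}\|_{L^1 \to L^\infty} \le 1$. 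Both bounds are independent of $\im z$, so the admissibility hypotheses of Stein's theorem are met. At the intermediate real parameter $\theta = 1 - 2/q$ one has $\kappa_\theta = \kappa$, and interpolating $(L^2,L^2)$ with $(L^1,L^\infty)$ yields $(L^{q'},L^q)$ with operator norm at most
\[
    \Big(\int_G|\kappa(x)|^{q/2}\varphi_0(x)\,dx\Big)^{1-\theta}\cdot 1^{\theta}
    = \Big(\int_G|\kappa(x)|^{q/2}\varphi_0(x)\,dx\Big)^{2/q}.
\]

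The only nontrivial check is the admissibility of the family $\{T_{\kappa_z}\}$ in Stein's sense, namely analyticity of $z\mapsto\langle T_{\kappa_z}f,g\rangle$ for a dense class of test functions and appropriate growth on vertical lines. This is routine once one restricts attention to simple $K$-biinvariant $\kappa$, and the general measurable case follows by monotone convergence on $|\kappa|$. If needed, one can insert an auxiliary factor $e^{z^2}$ in the definition of $\kappa_z$ to guarantee uniform growth control in the strip and absorb it into the constant at the end.
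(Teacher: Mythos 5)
Your proof is correct, and its skeleton is the same as the paper's: establish the two endpoints $q=2$ and $q=\infty$ and pass to intermediate $q$ by complex interpolation. The differences are in the mechanics of both steps. At $q=2$ the paper simply invokes Herz's \emph{principe de majoration} \cite{Cow1997} to get $\|\cdot*\kappa\|_{L^2\to L^2}\le\int_G\varphi_0|\kappa|$, whereas you reprove this special case via the spherical Plancherel theorem and the bound $|\varphi_\lambda|\le\varphi_0$ for $\lambda\in\mathfrak{a}$; this is a perfectly valid, self-contained substitute (only the inequality $\|T_\kappa\|_{L^2\to L^2}\le\sup_\lambda|\mathcal{H}\kappa(\lambda)|$ is needed, so you need not worry about the exact identification of the operator norm). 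At the interpolation step the paper interpolates the kernel \emph{spaces}, using $[\mathcal{A}_2,\mathcal{A}_\infty]_\theta=\mathcal{A}_q$ via $[L^1(G,\varphi_0\,dx),L^\infty(G,\varphi_0\,dx)]_\theta=L^{q/2}(G,\varphi_0\,dx)$, while you run Stein's theorem on the concrete analytic family $\kappa_z=|\kappa|^{(1-z)q/2}\operatorname{sgn}\kappa$; the two are equivalent incarnations of the same complex interpolation, and your exponent bookkeeping ($\theta=1-2/q$, $\kappa_\theta=\kappa$, target $L^{q'}\to L^q$, norm $(\int|\kappa|^{q/2}\varphi_0)^{2/q}$) is right. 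Your version requires the admissibility check you flag (analyticity and vertical growth for the family $z\mapsto\langle T_{\kappa_z}f,g\rangle$, handled by truncating to simple $\kappa$ and the $e^{z^2}$ damping factor), which the space-interpolation formulation avoids; in exchange it is more elementary in that it does not appeal to the interpolation theory of weighted $L^p$ spaces.
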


\begin{proof}
For $s \in [2, + \infty)$, we define $\mathcal{A}_{s}$ as the space of all $K$-bi-invariant functions  on $G$ such that
\begin{align*}
\int_{G} dx \ \varphi_{0}(x) | \kappa (x) |^{s/2} < \infty.
\end{align*}
Given $\kappa$ in  $\mathcal{A}_{s}$, we set
\begin{align*}
\| \kappa \|_{\mathcal{A}_{s}} = \Big( \int_{G} dx \ \varphi_{0}(x) | \kappa (x) |^{s/2} \Big)^{2/s}.
\end{align*}
For $s = + \infty$, we denote by $\mathcal{A}_{\infty}$ the space of $L^{\infty} (G, dx)$ $K$-bi-invariant functions on $G$ and by $\| . \|_{\mathcal{A}_{\infty}}$ the $L^{\infty}$-norm. We show in the following that, for  any $q \in [ 2, + \infty ]$, we have
\begin{align*}
L^{q'} (G) * \mathcal{A}_{q} \subset L^{q} (G),
\end{align*}
which yields our lemma. 

The case where $q=2$ follows by Herz's criterion (see \cite{Cow1997}): 
\begin{align*}
\| \cdot  * \kappa \|_{L^2(G) \rightarrow L^2(G)} \le \int_{G} dx \ \varphi_{0}(x) | \kappa (x) | = \| \kappa \|_{\mathcal{A}_{2}}
\end{align*}
for every $\kappa$ in $\mathcal{A}_{2}$. When $q = + \infty$, taking $f$ in $L^{1} (G)$ and $\kappa$ in $\mathcal{A}_{\infty}$, we have that for every $x$ in $G$,
\begin{align*}
| f * \kappa (x) | \le \int_{G} dy \ |f(y)| |\kappa(y^{-1}x)| \le \| \kappa \|_{\mathcal{A}_{\infty}} \| f \|_{L^{1}(G)}.
\end{align*}
By interpolation between the cases $q=2$ and $q =\infty$, we obtain that
\begin{align*}
\big[ L^2(G), L^{1}(G) \big]_{\theta} * \big[ \mathcal{A}_{2} , \mathcal{A}_{\infty} \big]_{\theta} \subset  \big[ L^{2} (G), L^{\infty}(G) \big]_{\theta}
\end{align*}
with $\theta = 2/q$. On the one hand,
\begin{align*}
\big[ L^2(G), L^{1}(G) \big]_{\theta} = L^{q'} (G) 
\quad and \quad
 \big[ L^{2} (G), L^{\infty}(G) \big]_{\theta} = L^{q} (G).
\end{align*}
On the other hand, since
\begin{align*}
\big[ L^{1}(G, \varphi_{0} dx), L^{\infty}(G, \varphi_{0} dx) \big]_{\theta} 
= L^{q/2}(G, \varphi_{0} dx),
\end{align*}
we have $\big[ \mathcal{A}_{2} , \mathcal{A}_{\infty} \big]_{\theta} =  \mathcal{A}_{q}$. This concludes the proof of \cref{Kunze-Stein}.
\end{proof}

\subsection{Small time dispersive estimate}

\begin{theorem}\label{Small time dispersive estimate}
Assume that $0 < |t| <1$, $2 < q <  + \infty$ and $\sigma = (d+1)  ( \frac{1}{2} - \frac{1}{q} )$. Then
\begin{align*}
\| \widetilde{D}^{-\sigma} e^{it \sqrt{- \Delta}} \|_{L^{q'}(X) \rightarrow L^{q}(X)} \lesssim |t|^{(d-1) ( \frac{1}{2} - \frac{1}{q}) }.
\end{align*}
\end{theorem}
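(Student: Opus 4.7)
The plan is to deduce the bound by applying Stein's complex interpolation theorem to the analytic family $\widetilde{W}_t^\sigma$ introduced in \eqref{analytic family of operators} on the vertical strip $0\le\re\sigma\le\tfrac{d+1}{2}$. The value $\sigma=(d+1)(\tfrac12-\tfrac1q)$ corresponds to the interior interpolation parameter $\theta=1-\tfrac2q$ between $L^2\to L^2$ on the left edge and $L^1\to L^\infty$ on the right edge; the time-factor $|t|^{-(d-1)/2}$ available at $\re\sigma=\tfrac{d+1}{2}$ then interpolates into the claimed exponent $(d-1)(\tfrac12-\tfrac1q)$ (matching the small-time bound of \cref{dispersive estimate}).

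I would first dispose of the low-frequency piece $W_t^{\sigma,0}$ uniformly in $\sigma$ and $t$. By \cref{estimate omega 0}(i), $|\omega_t^{\sigma,0}(x)|\lesssim\varphi_0(x)$ uniformly in $t\in\mathbb{R}^*$ and $\re\sigma\in[0,\tfrac{d+1}{2}]$. Feeding this into \cref{Kunze-Stein} yields
\[
\|\,\cdot*\omega_t^{\sigma,0}\,\|_{L^{q'}(X)\to L^q(X)}\lesssim\Bigl(\int_G\varphi_0(x)^{1+q/2}\,dx\Bigr)^{2/q}<\infty
\]
for every $q>2$, using the standard finiteness of $\int_G\varphi_0^{\,p}$ for $p>2$ on noncompact symmetric spaces. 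This $O(1)$ contribution is harmless for the overall bound on $0<|t|<1$.

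For the high-frequency part I would apply Stein interpolation to $\widetilde{W}_t^{\sigma,\infty}$. On $\re\sigma=0$, writing $\sigma=i\tau$, the operator is a composition of the unitary $e^{it\sqrt{-\Delta}}$, the $L^2$-isometric multiplier $\widetilde{D}^{-i\tau}$, the orthogonal spectral projection $\chi_\infty^\rho(\sqrt{-\Delta-|\rho|^2})$, and the scalar $e^{\sigma^2}/\Gamma(\tfrac{d+1}{2}-\sigma)$, whose modulus grows at most polynomially in $|\tau|$ by Stirling; this gives an admissible $L^2\to L^2$ bound. On $\re\sigma=\tfrac{d+1}{2}$, \cref{estimate omega infinity} supplies $|\widetilde{\omega}_t^{\sigma,\infty}(x)|\lesssim|t|^{-(d-1)/2}\varphi_0(x)\le|t|^{-(d-1)/2}$ for $0<|t|<1$, hence the $L^1\to L^\infty$ bound $\lesssim|t|^{-(d-1)/2}$, again with admissible vertical growth. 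Stein's theorem at the real interior point $\sigma=(d+1)(\tfrac12-\tfrac1q)$ then produces the corresponding $L^{q'}\to L^q$ bound $\lesssim|t|^{-(d-1)(1/2-1/q)}$, and dividing by the finite positive scalar $e^{\sigma^2}/\Gamma((d+1)/q)$ transfers it to $\chi_\infty^\rho(\sqrt{-\Delta-|\rho|^2})\widetilde{D}^{-\sigma}e^{it\sqrt{-\Delta}}$; together with the low-frequency piece this yields the small-time dispersive bound claimed in the theorem.

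The main obstacle is the analytic bookkeeping demanded by Stein's theorem: the renormalization $e^{\sigma^2}/\Gamma(\tfrac{d+1}{2}-\sigma)$ must simultaneously tame the singularity of the kernel at the right endpoint (so that the pointwise estimates of \cref{section pointwise} hold there) and remain of subexponential vertical growth, so that the two boundary bounds are admissible. Once the edge bounds are in place with acceptable $|\im\sigma|$-dependence, the interpolation and the recombination with the low-frequency contribution are routine.
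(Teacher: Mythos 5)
Your proposal follows essentially the same route as the paper: the low-frequency piece is handled uniformly via \cref{estimate omega 0}(i) and the Kunze--Stein criterion of \cref{Kunze-Stein}, and the high-frequency piece via Stein interpolation of the analytic family $\widetilde{W}_t^{\sigma,\infty}$ between the $L^2\to L^2$ bound at $\re\sigma=0$ and the $L^1\to L^\infty$ bound $\lesssim|t|^{-(d-1)/2}$ at $\re\sigma=\tfrac{d+1}{2}$ coming from \cref{estimate omega infinity}. The only loose point is attributing the admissible vertical growth at $\re\sigma=0$ to Stirling alone (the reciprocal Gamma factor actually grows exponentially in $|\im\sigma|$; it is the factor $e^{\sigma^2}$ that tames it), but this does not affect the argument.
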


\begin{proof}
We divide the proof into two parts, corresponding to the kernel decomposition $\omega_{t}^{\sigma} =\omega_{t}^{\sigma,0} + \omega_{t}^{\sigma, \infty}$. According to \cref{Kunze-Stein}, we have
\begin{align*}
\| \cdot * \omega_{t}^{\sigma,0} \|_{L^{q'}(X) \rightarrow L^{q}(X)}
\le  \Big\lbrace \int_{G} dx \ \varphi_{0}(x) | \omega_{t}^{\sigma,0}(x) |^{\frac{q}{2}}  \Big\rbrace^{\frac{2}{q}}.
\end{align*}
By using the Cartan decomposition, together with the fact that $\delta(H) \lesssim e^{2 \left\langle {\rho, H} \right\rangle}$, we obtain
\begin{align*}
 \int_{G} dx \ \varphi_{0}(x) | \omega_{t}^{\sigma,0}(x) |^{\frac{q}{2}}
 \lesssim \int_{\mathfrak{a}^{+}} dH \ \varphi_{0}(H) | \omega_{t}^{\sigma,0}(H) |^{\frac{q}{2}} e^{2 \left\langle {\rho, H} \right\rangle}.
\end{align*}
According to \cref{estimate omega 0} and to the estimate \eqref{basic spherical function estimate} of the ground spherical function $\varphi_{0}$, we obtain next
\begin{align*}
\int_{\mathfrak{a}^{+}} dH \ \varphi_{0}(H) | \omega_{t}^{\sigma,0}(H) |^{\frac{q}{2}} e^{2 \left\langle {\rho, H} \right\rangle}
\lesssim \int_{\mathfrak{a}^{+}} dH \ (1 + |H| )^{\frac{q}{2} +1} e^{- (\frac{q}{2} - 1) \left\langle {\rho , H } \right\rangle} < + \infty
\end{align*}
for any $q \in (2, + \infty)$. Hence
\begin{align*}
\| \cdot * \omega_{t}^{\sigma,0} \|_{L^{q'}(X) \rightarrow L^{q}(X)} < + \infty.
\end{align*}
For the second part $\omega_{t}^{\sigma, \infty}$, we use an analytic interpolation between $L^2 \rightarrow L^2$ and $L^1 \rightarrow L^{\infty}$ estimates for the family of operators $\widetilde{W}_{t}^{\sigma, \infty}$ defined by \eqref{analytic family of operators} in the vertical strip $0 \le \re \sigma \le \frac{d+1}{2}$. When $\re \sigma = 0$, the spectral theorem yields
\begin{align*}
\| \widetilde{W}_{t}^{\sigma, \infty} \|_{L^2(X) \rightarrow L^2(X)}
\lesssim \| e^{it \sqrt{- \Delta}} \|_{L^2(X) \rightarrow L^2(X)} = 1.
\end{align*}
for all $t \in \mathbb{R}^{*}$. When $\re \sigma = \frac{d+1}{2}$, \cref{estimate omega infinity} gives
\begin{align*}
\| \widetilde{W}_{t}^{\sigma, \infty} \|_{L^1(X) \rightarrow L^{\infty}(X)}
\lesssim \| \widetilde{\omega}_{t}^{\sigma, \infty} \|_{L^{\infty}(X)} \lesssim t^{- \frac{d-1}{2}}.
\end{align*}
By applying Stein's interpolation theorem for an analytic family of operators, we obtain
\begin{align*}
\| \widetilde{W}_{t}^{\frac{d+1}{2}(1- \theta), \infty} \|_{L^{q'}(X) \rightarrow L^{q}(X)} \lesssim t^{- \frac{d-1}{2} (1- \theta)},
\end{align*}
with $\theta = \frac{2}{q}$. In conclusion,
\begin{align*}
\| W_{t}^{\sigma} \|_{L^{q'}(X) \rightarrow L^{q}(X)} \lesssim |t|^{-(d-1) ( \frac{1}{2} - \frac{1}{q}) },
\end{align*}
for $0 < |t| < 1$, $2 < q < + \infty$ and $\sigma = (d+1) (\frac{1}{2} - \frac{1}{q})$.
\end{proof}
\subsection{Large time dispersive estimate}
\begin{theorem}\label{Large time dispersive estimate}
Assume that $|t| \ge 1$, $2 < q <  + \infty$ and $\sigma = (d+1)  ( \frac{1}{2} - \frac{1}{q} )$. Then
\begin{align*}
\| \widetilde{D}^{-\sigma} e^{it \sqrt{- \Delta}} \|_{L^{q'}(X) \rightarrow L^{q}(X)} \lesssim |t|^{- \frac{d}{2}}.
\end{align*}
\end{theorem}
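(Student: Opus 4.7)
The plan is to mirror the structure of \cref{Small time dispersive estimate}, splitting $\omega_{t}^{\sigma}=\omega_{t}^{\sigma,0}+\omega_{t}^{\sigma,\infty}$ and estimating each piece separately, but feeding in the large-time bounds from \cref{estimate omega 0} and \cref{estimate omega infinity} instead of the small-time ones. By symmetry we may assume $t\ge1$. Set $\sigma=(d+1)(\tfrac{1}{2}-\tfrac{1}{q})$ and $\theta=2/q\in(0,1)$.

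For the piece $\omega_{t}^{\sigma,0}$, I would apply \cref{Kunze-Stein} and then use the Cartan decomposition together with the bound $\delta(H)\lesssim e^{2\langle\rho,H\rangle}$ to reduce the problem to
\begin{align*}
\|\cdot*\omega_{t}^{\sigma,0}\|_{L^{q'}\to L^{q}}^{q/2}
\;\lesssim\;\int_{\mathfrak{a}^{+}} dH\;\varphi_{0}(H)\,|\omega_{t}^{\sigma,0}(H)|^{q/2}\,e^{2\langle\rho,H\rangle}.
\end{align*}
I would then split this integral according to whether $|H|\le t/2$ or $|H|\ge t/2$. On the first region, \cref{estimate omega 0}(ii)(b) yields $|\omega_{t}^{\sigma,0}(H)|\lesssim t^{-d/2}(1+|H|)^{(d-\ell)/2}\varphi_{0}(H)$; combined with $\varphi_{0}(H)^{q/2+1}e^{2\langle\rho,H\rangle}\asymp(1+|H|)^{M}e^{-(q/2-1)\langle\rho,H\rangle}$, the factor $q/2-1>0$ ensures exponential integrability over $\mathfrak{a}^{+}$, giving a contribution of order $t^{-dq/4}$, i.e.\ $t^{-d/2}$ after the $2/q$-th root. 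On the second region $|H|\ge t/2$, I would invoke the rapid decay of \cref{estimate omega 0}(ii)(a) with $N$ chosen large enough that $t^{-Nq/2}$ beats $t^{-dq/4}$; the corresponding integral is again bounded by the same exponential convergence argument.

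For the piece $\omega_{t}^{\sigma,\infty}$, I would interpolate analytically on the family $\widetilde{W}_{t}^{\sigma,\infty}$ of \eqref{analytic family of operators} in the vertical strip $0\le\re\sigma\le\frac{d+1}{2}$. At $\re\sigma=0$ the spectral theorem still gives $\|\widetilde{W}_{t}^{\sigma,\infty}\|_{L^{2}\to L^{2}}\lesssim 1$. At $\re\sigma=\frac{d+1}{2}$, \cref{estimate omega infinity} in the regime $t\ge1$ provides, for any integer $N$, the pointwise bound $|\widetilde{\omega}_{t}^{\sigma,\infty}(x)|\lesssim t^{-N}(1+|x|)^{N}\varphi_{0}(x)$; since $(1+|H|)^{N}\varphi_{0}(H)$ is uniformly bounded on $\overline{\mathfrak{a}^{+}}$ (the exponential decay of $\varphi_{0}$ swallows the polynomial), we obtain $\|\widetilde{W}_{t}^{\sigma,\infty}\|_{L^{1}\to L^{\infty}}\lesssim t^{-N}$. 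Stein's interpolation theorem then yields $\|\widetilde{W}_{t}^{\sigma,\infty}\|_{L^{q'}\to L^{q}}\lesssim t^{-N(1-\theta)}$ at $\sigma=\frac{d+1}{2}(1-\theta)=(d+1)(\tfrac{1}{2}-\tfrac{1}{q})$, and choosing $N\ge\frac{d/2}{1-2/q}$ gives the desired bound $t^{-d/2}$.

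The main obstacle I anticipate is accounting precisely for the polynomial prefactors and the Jacobian growth in the Kunze-Stein integral for $\omega_{t}^{\sigma,0}$: one must check that the factor $(1+|H|)^{(d-\ell)/2}$ from the kernel, together with the $(1+|H|)$-type growth of $\varphi_{0}^{q/2+1}$ combined with $e^{2\langle\rho,H\rangle}$, is dominated by the exponential decay $e^{-(q/2-1)\langle\rho,H\rangle}$, which requires $q>2$ strictly. The endpoint $q=2$ is of course handled separately by the spectral theorem as noted after \cref{dispersive estimate}.
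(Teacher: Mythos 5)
Your proposal is correct and follows essentially the same route as the paper: the paper likewise splits $\omega_{t}^{\sigma,0}$ according to $|H|\lessgtr|t|/2$ (written there as $\mathds{1}_{B(0,|t|/2)}\omega_{t}^{\sigma,0}+\mathds{1}_{X\setminus B(0,|t|/2)}\omega_{t}^{\sigma,0}$), applies \cref{Kunze-Stein} with the large-time bounds of \cref{estimate omega 0}, and handles $\omega_{t}^{\sigma,\infty}$ by Stein interpolation on $\widetilde{W}_{t}^{\sigma,\infty}$ using the $t^{-N}$ bound of \cref{estimate omega infinity}. The only differences are cosmetic (where the splitting is performed and how the $O(|t|^{-N})$ on the outer region is extracted).
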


\begin{proof}
We divide the proof into three parts, corresponding to the kernel decomposition
\begin{align*}
\omega_{t}^{\sigma} = \mathds{1}_{B(0, \frac{|t|}{2})} \omega_{t}^{\sigma,0} + \mathds{1}_{X \backslash B(0, \frac{|t|}{2})} \omega_{t}^{\sigma,0} +  \omega_{t}^{\sigma, \infty}.
\end{align*}
By using \cref{Kunze-Stein} and \cref{estimate omega 0}, we have
\begin{align*}
\| \cdot * \lbrace \mathds{1}_{B(0, \frac{|t|}{2})} \omega_{t}^{\sigma,0} & \rbrace \|_{L^{q'}(X) \rightarrow L^{q}(X)} \\
\lesssim& \Big\lbrace \int_{\lbrace H \in \mathfrak{a}^{+}: 0 < |H| < \frac{|t|}{2} \rbrace} dH \ \varphi_{0}(H) | \omega_{t}^{\sigma,0}(H) |^{\frac{q}{2}} e^{2 \left\langle {\rho, H} \right\rangle} \Big\rbrace^{\frac{2}{q}} \\
\lesssim& t^{-\frac{d}{2}}  \underbrace{ \Big\lbrace \int_{|H| < \frac{|t|}{2}} dH \ (1+|H|)^{\frac{q}{2} (d+1)} e^{- (\frac{q}{2} - 1) \left\langle {\rho , H } \right\rangle} \Big\rbrace^{\frac{2}{q}} }_{< + \infty}
\end{align*}
and
\begin{align*}
\| \cdot * \lbrace \mathds{1}_{X \backslash B(0, \frac{|t|}{2})} \omega_{t}^{\sigma,0} & \rbrace \|_{L^{q'}(X) \rightarrow L^{q}(X)} \\
\lesssim&  \Big\lbrace \int_{|H| \ge \frac{|t|}{2}} dH \ (1+|H|)^{\frac{q}{2} (d+1)} e^{- (\frac{q}{2} - 1) \left\langle {\rho , H } \right\rangle} \Big\rbrace^{\frac{2}{q}} ,
\end{align*}
which is $O(|t|^{-N})$, for any $N \in \mathbb{N}$. Instead of $\omega_{t}^{\sigma, \infty}$, we consider again the kernel $\widetilde{\omega}_{t}^{\sigma, \infty}$. The associated operators satisfy
\begin{align*}
\| \widetilde{W}_{t}^{\sigma, \infty} \|_{L^1(X) \rightarrow L^{\infty}(X)}
\lesssim t^{-N} \qquad \forall N \in \mathbb{N},
\end{align*}
when $\re \sigma = \frac{d+1}{2}$, according to \cref{estimate omega infinity}. By applying Stein's interpolation theorem for an analytic family of operators and by summing up these estimates, we obtain
\begin{align*}
\| W_{t}^{\sigma} \|_{L^{q'}(X) \rightarrow L^{q}(X)} \lesssim |t|^{- \frac{d}{2} },
\end{align*}
for $ |t| \ge 1$, $2 < q < + \infty$ and $\sigma = (d+1) (\frac{1}{2} - \frac{1}{q})$.
\end{proof}

\begin{remark}
The standard $TT^{*}$ method used to prove the Strichartz inequality breaks down in the critical case. The dyadic decomposition method carried out in \cite{KeTa1998} takes care of the endpoints, but it requires a stronger dispersive property than \cref{Small time dispersive estimate} in small time, namely our main theorem.
\end{remark}

\begin{proof}[Proof of \cref{dispersive estimate}]
It follows from \cref{Small time dispersive estimate}, \cref{Large time dispersive estimate} and the $L^q$ kernel estimate
\begin{align*}
\| \omega_{t}^{\sigma} \|_{L^{q}(X)}
\le \| \omega_{t}^{\sigma,0} \|_{L^{q}(X)} + \| \widetilde{\omega}_{t}^{\sigma, \infty} \|_{L^{q}(X)}
\lesssim
\begin{cases}
|t|^{- \frac{d-1}{2}}, \qquad & \textnormal{if}\quad 0 < |t| < 1, \\
|t|^{- \frac{d}{2}}, \qquad & \textnormal{if}\quad |t| \ge 1,
\end{cases}
\end{align*}
for $2 < q < + \infty$ and $\sigma = \frac{d+1}{2} ( \frac{1}{2} - \frac{1}{q} )$. Indeed, by using standard interpolation arguments between
\begin{align*}
\begin{cases}
\| W_{t}^{\sigma} \|_{L^{1}(X) \rightarrow L^{q}(X)} 
\lesssim \| \omega_{t}^{\sigma} \|_{L^{q}(X)} \lesssim t^{- \frac{d-1}{2}}, \\
\| W_{t}^{\sigma} \|_{L^{q'}(X) \rightarrow L^{\infty}(X)} 
\lesssim \| \omega_{t}^{\sigma} \|_{L^{q}(X)} \lesssim t^{- \frac{d-1}{2}}, \\
\| W_{t}^{\sigma} \|_{L^{2}(X) \rightarrow L^{2}(X)} = 1,
\end{cases}
\end{align*}
for small time and
\begin{align*}
\begin{cases}
\| W_{t}^{\sigma} \|_{L^{1}(X) \rightarrow L^{q}(X)} 
\lesssim \| \omega_{t}^{\sigma} \|_{L^{q}(X)} \lesssim t^{- \frac{d}{2}}, \\
\| W_{t}^{\sigma} \|_{L^{q'}(X) \rightarrow L^{\infty}(X)} 
\lesssim \| \omega_{t}^{\sigma} \|_{L^{q}(X)} \lesssim t^{- \frac{d}{2}}, \\
\| W_{t}^{\sigma} \|_{L^{q'}(X) \rightarrow L^{q}(X)} \lesssim |t|^{- \frac{d}{2}},
\end{cases}
\end{align*}
for large time, we conclude.
\end{proof}

\section{Strichartz inequality and applications}\label{section applcations}
Let $\sigma \in \mathbb{R}$ and $1<q< \infty$. Recall that the Sobolev  space $H^{\sigma,q}(X)$ is the image of $L^q(X)$ under the operator $(- \Delta)^{- \frac{\sigma}{2}}$, equipped with the norm
\begin{align*}
\| f \|_{H^{\sigma,q}(X)} = \| (- \Delta)^{ \frac{\sigma}{2}} f \|_{L^q(X)}.
\end{align*}
If  $\sigma=N$ is a nonnegative integer, then $H^{\sigma,q}(X)$ coincides with the classical Sobolev space
\begin{align*}
W^{N,q} (X) = \lbrace f \in L^q(X) \ | \ \nabla^{j} f \in L^q(X), \ \forall 1 \le j \le N \rbrace,
\end{align*}
defined by means of covariant derivatives. The following Sobolev embedding theorem is used in next subsection.
\begin{theorem}
Let $1< q_1,q_2 < \infty$ and $\sigma_1, \sigma_2 \in \mathbb{R}$ such that $\sigma_1 - \sigma_2 \ge \frac{d}{q_1} - \frac{d}{q_2} \ge 0$. Then
\begin{align}\label{embedding}
H^{\sigma_1,q_1}(X) \subset H^{\sigma_2,q_2}(X).
\end{align}
\end{theorem}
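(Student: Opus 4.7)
The inclusion is equivalent, by the definition of the norms, to the $L^{q_1}(X)\to L^{q_2}(X)$ boundedness of $(-\Delta)^{-\alpha/2}$, where $\alpha:=\sigma_1-\sigma_2\ge 0$. Setting $\beta:=\tfrac{d}{q_1}-\tfrac{d}{q_2}\in[0,\alpha]$ and exploiting that $-\Delta$ has strictly positive spectrum, my plan is to factor
\[
(-\Delta)^{-\alpha/2} \;=\; (-\Delta)^{-(\alpha-\beta)/2}\,\circ\,(-\Delta)^{-\beta/2},
\]
and reduce to proving separately (a) the scale-critical embedding $(-\Delta)^{-\beta/2}:L^{q_1}\to L^{q_2}$ and (b) the self-improvement $(-\Delta)^{-\gamma/2}:L^{q_2}\to L^{q_2}$ for $\gamma=\alpha-\beta\ge 0$.

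Both pieces are analyzed through the subordination formula
\[
(-\Delta)^{-\mu/2} \;=\; \tfrac{1}{\Gamma(\mu/2)}\int_0^{+\infty} t^{\mu/2-1}\,e^{t\Delta}\,dt \qquad (\mu>0),
\]
which realizes them as convolution with a $K$-bi-invariant kernel $k_\mu$ built from the heat kernel $p_t$. Standard heat kernel bounds on $X$, together with the spectral gap $-\Delta\ge|\rho|^2>0$, give
\[
k_\mu(x) \lesssim
\begin{cases}
|x|^{\mu-d} & \text{if } |x|\le 1,\\
(1+|H_x|)^{N}\,e^{-\langle\rho,H_x\rangle} & \text{if } |x|\ge 1,
\end{cases}
\]
the decay at infinity coming from the factor $e^{-|\rho|^2 t}$ in the large-$t$ regime.

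Step (b) is then easy: for $\gamma>0$ the above bounds, combined with the volume estimate $\delta(H)\asymp e^{2\langle\rho,H\rangle}$, imply $k_\gamma\in L^{1}(X)$ (the local singularity $|x|^{\gamma-d}$ is locally integrable and the exponential decay at infinity beats the volume growth), so Young's inequality closes this step; the case $\gamma=0$ is trivial. Note that this is precisely where the inequality $\alpha\ge\beta$, rather than the Euclidean equality $\alpha=\beta$, is absorbed: one gains ``for free'' any extra smoothness $\gamma>0$ thanks to the spectral gap.

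The main obstacle is step (a), the scale-critical embedding, where the local singularity $|x|^{\beta-d}$ sits exactly at the Riesz threshold and fails to be $L^{1}$. Here I would decompose $k_\beta = \mathds{1}_{|x|\le 1}\,k_\beta + \mathds{1}_{|x|>1}\,k_\beta$. The local part is handled by the Hardy--Littlewood--Sobolev inequality transferred to $X$ (derivable from the on-diagonal heat bound $p_t(x)\lesssim t^{-d/2}$ for small $t$ via Varopoulos's argument, giving a weak-$L^{q_2/q_1'}$ estimate followed by Marcinkiewicz interpolation), which produces precisely the $L^{q_1}\to L^{q_2}$ bound since $\beta=d(1/q_1-1/q_2)$. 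The global part, enjoying exponential decay together with an auxiliary factor $\varphi_0$, is controlled via the Kunze--Stein criterion of \cref{Kunze-Stein}. Summing the two contributions yields (a) and completes the argument.
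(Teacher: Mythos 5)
The paper does not actually prove this embedding; it simply refers to Triebel's book, so any self-contained argument is welcome in principle. Your overall strategy (factor the Riesz potential, subordinate to the heat semigroup, HLS for the local singularity, a group-theoretic convolution criterion for the part at infinity) is the right one, but step (b) as written contains a genuine error: the kernel $k_\gamma$ of $(-\Delta)^{-\gamma/2}$ is \emph{never} in $L^1(X)$ for $\gamma>0$. Indeed, since $X$ is stochastically complete, $\int_X p_t(x)\,dx=1$ for every $t>0$, so
\begin{align*}
\int_X k_\gamma(x)\,dx \;=\; \frac{1}{\Gamma(\gamma/2)}\int_0^{+\infty} t^{\gamma/2-1}\Big(\int_X p_t(x)\,dx\Big)\,dt \;=\; \frac{1}{\Gamma(\gamma/2)}\int_0^{+\infty} t^{\gamma/2-1}\,dt \;=\;+\infty .
\end{align*}
The spectral gap $-\Delta\ge|\rho|^2$ is an $L^2$ statement and produces no decay of $\|e^{t\Delta}\|_{L^1\to L^1}$, which equals $1$ for all $t$. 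Concretely, even granting your pointwise bound $k_\gamma(x)\lesssim(1+|H_x|)^N e^{-\langle\rho,H_x\rangle}$ at infinity, this decay only cancels \emph{half} of the volume density $\delta(H)\asymp e^{2\langle\rho,H\rangle}$: the integral $\int_{\mathfrak a^+}(1+|H|)^N e^{-\langle\rho,H\rangle}e^{2\langle\rho,H\rangle}\,dH$ diverges. This mismatch between $e^{-\langle\rho,H\rangle}$ and $e^{2\langle\rho,H\rangle}$ is precisely why the paper never uses Young's inequality and instead works with the weighted criterion of \cref{Kunze-Stein}, whose weight $\varphi_0\asymp(\text{poly})\,e^{-\langle\rho,H\rangle}$ supplies the missing half.

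The conclusion of step (b) is nevertheless true, but you must replace Young's inequality by the Herz majorant principle: for a nonnegative $K$-bi-invariant kernel, the $L^q\to L^q$ convolution norm equals its spherical transform at $\lambda=i(\frac{2}{q}-1)\rho$, and $\mathcal H k_\gamma\big(i(\tfrac2q-1)\rho\big)=\big(|\rho|^2(1-(\tfrac2q-1)^2)\big)^{-\gamma/2}<\infty$ exactly for $1<q<\infty$ (equivalently, $0$ lies outside the $L^q$ spectrum of $-\Delta$ for these $q$). A related caveat affects the global part of step (a): \cref{Kunze-Stein} as stated bounds $L^{q'}\to L^{q}$ for \emph{dual} exponents, whereas you need $L^{q_1}\to L^{q_2}$ with $q_1,q_2$ unrelated by duality; you should either invoke the full Herz/Kunze--Stein criterion for a pair of exponents, or interpolate, rather than cite the lemma as is. With these repairs the argument goes through.
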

We refer to \cite{Tri1992} for more details about function spaces on Riemannian manifolds. Let us state next the Strichartz inequality and some applications. The proofs are adapted straightforwardly from \cite{AnPi2014, APV2012} and are therefore omitted.
\subsection{Strichartz inequality}\label{section Strichartz}
Recall the linear inhomogeneous Klein-Gordon equation on $X$:
\begin{align}\label{Cauchy problem}
\begin{cases}
\partial_{t}^2 u(t,x) - \Delta u(t,x) = F(t,x), \\
u(0,x) =f(x) , \ \partial_{t}|_{t=0} u(t,x) =g(x).
\end{cases}
\end{align}
whose solution is given by Duhamel's formula:
\begin{align*}
u(t,x) 
= ( \cos t \sqrt{- \Delta} ) f(x)
+ \frac{\sin t\sqrt{- \Delta}}{\sqrt{- \Delta}} g(x)
+ \int_{0}^{t} \frac{\sin (t-s) \sqrt{- \Delta}}{\sqrt{- \Delta}} F(s,x)ds.
\end{align*}

We consider first the case $d \ge 4$ and discuss the $3$-dimensional case in the final remark. Recall that a couple $(p,q)$ is called admissible if $( \frac{1}{p}, \frac{1}{q} )$ belongs to the triangle
\begin{align*}
\textstyle
\Big\lbrace \Big( \frac{1}{p}, \frac{1}{q} \Big) \in \Big( 0, \frac{1}{2} \Big] \times \Big( 0, \frac{1}{2} \Big) \ \Big| \ \frac{1}{p} \ge \frac{d-1}{2} \Big( \frac{1}{2} - \frac{1}{q} \Big) \Big\rbrace 
\bigcup \Big\lbrace  \Big( 0, \frac{1}{2} \Big) \Big\rbrace.
\end{align*}
\vspace{-0.5cm}
\begin{figure}[!h]\label{n >4}
\centering
\begin{tikzpicture}[scale=0.47][line cap=round,line join=round,>=triangle 45,x=1.0cm,y=1.0cm]
\draw[->,color=black] (0.,0.) -- (8.805004895364174,0.);
\foreach \x in {,2.,4.,6.,8.,10.}
\draw[shift={(\x,0)},color=black] (0pt,-2pt);
\draw[color=black] (8,0.08808504628984362) node [anchor=south west] {\large $\frac{1}{p}$};
\draw[->,color=black] (0.,0.) -- (0.,8);
\foreach \y in {,2.,4.,6.,8.}
\draw[shift={(0,\y)},color=black] (-2pt,0pt);
\draw[color=black] (0.11010630786230378,8) node [anchor=west] {\large $\frac{1}{q}$};
\clip(-5,-2) rectangle (11.805004895364174,9.404473957611293);
\fill[line width=2.pt,color=ffqqqq,fill=ffqqqq,fill opacity=0.15000000596046448] (0.45855683542994374,5.928142080369191) -- (5.85410415683891,5.8847171522290775) -- (5.864960388873938,4.136863794589543) -- cycle;
\draw [line width=1.pt,dash pattern=on 5pt off 5pt,color=ffqqqq] (0.,6.)-- (6.,6.);
\draw [line width=1.pt,color=ffqqqq] (6.,6.)-- (6.,4.);
\draw [line width=1.pt,color=ffqqqq] (6.,4.)-- (0.,6.);
\draw [line width=0.8pt] (6.,4.)-- (6.,0.);
\draw [->,line width=0.5pt,color=ffqqqq] (1.8569381602310457,-0.8701903243695805) -- (2.891937454136701,4.811295161325333);
\begin{scriptsize}
\draw [fill=ffqqqq] (0.,6.) circle (5pt);
\draw[color=black] (-0.5709441083587729,6) node {\large $\frac{1}{2}$};
\draw[color=black] (-1.5,4) node {\large $\frac{1}{2}- \frac{1}{d-1}$};
\draw[color=black] (-0.5,-0.2) node {\large $0$};
\draw[color=black] (6.2,-0.7) node {\large $\frac{1}{2}$};
\draw[color=ffqqqq] (1.8,-1.3) node {\large $\frac{1}{p} = \frac{d-1}{2} \left( \frac{1}{2} - \frac{1}{q} \right)$};
\draw [color=ffqqqq] (6.,6.) circle (5pt);
\draw [fill=ffqqqq] (6.,4.) circle (5pt);
\end{scriptsize}
\end{tikzpicture}
\caption{Admissibility in dimension $d \ge 4$.}
\end{figure}

\begin{theorem}\label{thm strichartz}
Let $(p,q)$ and $(\tilde{p}, \tilde{q})$ be two admissible couples, and let
\begin{align*}
\textstyle
\sigma \ge \frac{d+1}{2} \left( \frac{1}{2} - \frac{1}{q} \right) \quad and \quad \widetilde{\sigma} \ge \frac{d+1}{2} \left( \frac{1}{2} - \frac{1}{\tilde{q}} \right).
\end{align*}
Then all solutions $u$ to the Cauchy problem \eqref{Cauchy problem} satisfy the  following Strichartz inequality:
\begin{align}\label{Strichartz}
\left\| \nabla_{\mathbb{R} \times X} u \right\|_{L^p(I; H^{-\sigma,q}(X))} \lesssim \left\| f \right\|_{H^1(X)} + \left\| g \right\|_{L^2(X)} + \left\| F \right\|_{L^{\tilde{p}'}(I; H^{\tilde{\sigma},\tilde{q}'}(X))}.
\end{align}
\end{theorem}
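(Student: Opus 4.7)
The plan is to reduce the Strichartz inequality to an abstract statement about the half-wave propagator $U_\sigma(t) = \widetilde{D}^{-\sigma} e^{it\sqrt{-\Delta}}$ and then apply a Keel--Tao type argument whose only input is the dispersive bound of \cref{dispersive estimate}. Via Duhamel's formula the solution $u$ of \eqref{Cauchy problem} splits into the data part $\cos(t\sqrt{-\Delta})f + \sin(t\sqrt{-\Delta})(-\Delta)^{-1/2}g$, which is a linear combination of $e^{\pm it\sqrt{-\Delta}}$ applied to $H^1$ or $L^2$ data, and the retarded integral of $e^{i(t-s)\sqrt{-\Delta}}(-\Delta)^{-1/2}F(s)$. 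Since the spectrum of $-\Delta$ on $X$ is bounded below by $|\rho|^{2}>0$, the multipliers $\widetilde{D}$, $\sqrt{-\Delta}$, and any fractional power of $1-\Delta$ are mutually comparable on the relevant spectral range, so one may replace $\widetilde{D}^{-\sigma}$ by $(-\Delta)^{-\sigma/2}$ and compute gradients in the energy norm without loss. Thus it suffices to establish
\begin{align*}
\|U_\sigma(t) f\|_{L^p(I; L^q(X))} \lesssim \|f\|_{L^2(X)}, \qquad \sigma=\tfrac{d+1}{2}\bigl(\tfrac{1}{2}-\tfrac{1}{q}\bigr),
\end{align*}
for $(p,q)$ admissible, together with its inhomogeneous $TT^*$ counterpart.

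The second step is the $TT^*$ computation: the composition $U_\sigma(t)U_\sigma(s)^{*} = \widetilde{D}^{-2\sigma}e^{i(t-s)\sqrt{-\Delta}}$ satisfies, by \cref{dispersive estimate},
\begin{align*}
\|U_\sigma(t)U_\sigma(s)^{*}\|_{L^{q'}\to L^{q}} \lesssim h(t-s), \qquad h(\tau)=\begin{cases} |\tau|^{-(d-1)(1/2-1/q)}, & |\tau|<1,\\ |\tau|^{-d/2}, & |\tau|\ge 1.\end{cases}
\end{align*}
Off the endpoint, one deduces Strichartz by Young's inequality in time: for $(p,q)$ strictly inside the admissibility triangle one has $\frac{1}{p}>\frac{d-1}{2}(\frac{1}{2}-\frac{1}{q})$, so $h\in L^{p/2}(\mathbb{R})$ (the small-time part is integrable by the strict inequality, the large-time tail by $d/2$ being strictly larger than the Knapp threshold). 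On the boundary line $\frac{1}{p}=\frac{d-1}{2}(\frac{1}{2}-\frac{1}{q})$, the Keel--Tao dyadic decomposition of $\mathbf{1}_{\{2^{k}\le|t-s|<2^{k+1}\}}$ combined with bilinear interpolation, as carried out in \cite{KeTa1998} and adapted in \cite{AnPi2014, APV2012}, handles the line including its endpoint $(p,q)=(2,\frac{2(d-1)}{d-3})$. For the inhomogeneous estimate with possibly different pairs $(p,q)$ and $(\widetilde p,\widetilde q)$, the Christ--Kiselev lemma converts the untruncated statement into the retarded Duhamel estimate away from the double endpoint, and the endpoint inhomogeneous argument of Keel--Tao covers the remaining case.

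A third step extends the result to all $\sigma \ge \frac{d+1}{2}(\frac{1}{2}-\frac{1}{q})$ by means of the Sobolev embedding \eqref{embedding} in the spatial variable, and dually for $\widetilde{\sigma}$; the energy bound at $q=2$ (the vertex $(\infty,2)$ of the triangle) is the unitarity of $e^{it\sqrt{-\Delta}}$ already noted after \cref{dispersive estimate}.

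The principal obstacle is the endpoint $(p,q) = (2, \frac{2(d-1)}{d-3})$, precisely as flagged in the remark following \cref{Large time dispersive estimate}: the dispersive rate jumps from $(d-1)(\frac{1}{2}-\frac{1}{q})$ for $|t|<1$ to $d/2$ for $|t|\ge 1$, so when implementing the Keel--Tao dyadic scheme one must verify that the bilinear estimates and the interpolation exponents are compatible with \emph{both} regimes simultaneously. This is not automatic from the Euclidean setup; however, because the large-time decay is strictly faster than the small-time one, every dyadic piece is dominated by the small-time bound, and the interpolation structure of \cite{KeTa1998} applies unchanged. Once this is secured, no further difficulties arise, which is why the proof in \cite{AnPi2014, APV2012} transfers verbatim.
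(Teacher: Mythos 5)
Your outline is correct and coincides with the paper's intended argument: the paper in fact omits the proof of this theorem, stating only that it is adapted straightforwardly from \cite{AnPi2014, APV2012}, and your reduction to the half-wave propagator via the spectral gap, the $TT^{*}$/Young argument strictly above the line $\frac{1}{p}=\frac{d-1}{2}(\frac{1}{2}-\frac{1}{q})$, the Keel--Tao dyadic and bilinear scheme on that line and at its endpoint (which is exactly why \cref{dispersive estimate} is stated with $\max(\frac{1}{2}-\frac{1}{q},\frac{1}{2}-\frac{1}{\tilde{q}})$ for distinct exponents), and the Sobolev embedding for general $\sigma,\tilde{\sigma}$ are precisely the ingredients of that adaptation. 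No gaps to report.
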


The admissible range in \eqref{Strichartz} can be widen by using the Sobolev embedding theorem.

\begin{corollary}\label{corollary strichartz}
Assume that $(p,q)$ and $(\tilde{p}, \tilde{q})$ are two couples corresponding to the square 
\begin{align*}
\Big[ 0, \frac{1}{2} \Big] \times \Big(0, \frac{1}{2} \Big) \bigcup \Big\lbrace \Big( 0, \frac{1}{2} \Big) \Big\rbrace,
\end{align*}
see Fig.(A) in the following.
Let $\sigma, \tilde{\sigma} \in \mathbb{R}$ such that $\sigma \ge \sigma(p,q)$, where
\begin{align*}
\textstyle
\sigma(p,q) =& \frac{d+1}{2} \Big( \frac{1}{2} - \frac{1}{q} \Big) + \max \Big\lbrace 0, \frac{d-1}{2} \Big( \frac{1}{2} - \frac{1}{q} \Big) - \frac{1}{p} \Big\rbrace,
\end{align*}
and similarly $\tilde{\sigma} \ge \sigma(\tilde{p}, \tilde{q})$. Then the Strichartz inequality \eqref{Strichartz} holds for all solutions to the Cauchy problem \eqref{Cauchy problem}.
\end{corollary}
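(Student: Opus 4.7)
The plan is to derive the corollary from \cref{thm strichartz} by interpolating each non-admissible couple with the unique admissible couple it sits "behind" on a vertical line $\{\frac{1}{p}\}\times(0,\frac{1}{2})$, and absorbing the discrepancy in Sobolev exponents using the embedding \eqref{embedding}. I would handle the homogeneous left-hand side and the Duhamel source term separately, since both corrections have the same shape.

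For the left-hand side norm $\|\nabla_{\mathbb{R}\times X}u\|_{L^{p}(I;H^{-\sigma,q}(X))}$, I split into two cases. If $(p,q)$ is admissible in the sense of \cref{thm strichartz}, the $\max$ in $\sigma(p,q)$ vanishes and the conclusion is already contained in that theorem. If $(p,q)$ is \emph{not} admissible, i.e.\ $\frac{1}{p}<\frac{d-1}{2}(\frac{1}{2}-\frac{1}{q})$, I introduce the auxiliary exponent $q_{0}\in(2,q)$ defined by
\begin{align*}
\textstyle
\frac{1}{q_{0}}=\frac{1}{2}-\frac{2}{p(d-1)},
\end{align*}
so that $(p,q_{0})$ lies exactly on the slanted edge of the admissibility triangle. \cref{thm strichartz} then controls $\|\nabla_{\mathbb{R}\times X}u\|_{L^{p}(I;H^{-\sigma_{0},q_{0}})}$ with $\sigma_{0}=\frac{d+1}{2}(\frac{1}{2}-\frac{1}{q_{0}})=\frac{d+1}{p(d-1)}$, and it remains to observe that the Sobolev embedding \eqref{embedding} gives $H^{-\sigma_{0},q_{0}}(X)\hookrightarrow H^{-\sigma,q}(X)$ as soon as $q_{0}\le q$ and $\sigma\ge\sigma_{0}+d(\frac{1}{q_{0}}-\frac{1}{q})$. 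A direct calculation shows that the latter threshold equals $d(\frac{1}{2}-\frac{1}{q})-\frac{1}{p}$, which is precisely $\sigma(p,q)$ as defined in the statement.

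For the inhomogeneous term $\|F\|_{L^{\widetilde{p}'}(I;H^{\widetilde{\sigma},\widetilde{q}'}(X))}$, I would argue symmetrically (this is the dual of the previous step). Outside the admissibility triangle, introduce $\widetilde{q}_{0}$ on the slanted edge for $\widetilde{p}$; \cref{thm strichartz} produces the Strichartz estimate with source space $L^{\widetilde{p}'}(I;H^{\widetilde{\sigma}_{0},\widetilde{q}_{0}'}(X))$. Since $\widetilde{q}_{0}<\widetilde{q}$ forces $\widetilde{q}_{0}'>\widetilde{q}'$, the embedding \eqref{embedding} yields $H^{\widetilde{\sigma},\widetilde{q}'}(X)\hookrightarrow H^{\widetilde{\sigma}_{0},\widetilde{q}_{0}'}(X)$ provided $\widetilde{\sigma}\ge\widetilde{\sigma}_{0}+d(\frac{1}{\widetilde{q}'}-\frac{1}{\widetilde{q}_{0}'})=\widetilde{\sigma}_{0}+d(\frac{1}{\widetilde{q}_{0}}-\frac{1}{\widetilde{q}})$, and the same arithmetic identifies this threshold with $\sigma(\widetilde{p},\widetilde{q})$. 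Combining the two estimates for the solution of \eqref{Cauchy problem} gives the extended inequality.

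There is no serious analytic obstacle here: both \cref{thm strichartz} and the embedding \eqref{embedding} are already available, and the work is reduced to the bookkeeping above. The only step that could be mishandled is the choice of auxiliary couple $(p,q_{0})$ and the verification that the Sobolev deficit $d(\frac{1}{q_{0}}-\frac{1}{q})$ combines with $\sigma_{0}=\frac{d+1}{2}(\frac{1}{2}-\frac{1}{q_{0}})$ to produce exactly the quantity $\frac{d+1}{2}(\frac{1}{2}-\frac{1}{q})+\frac{d-1}{2}(\frac{1}{2}-\frac{1}{q})-\frac{1}{p}$ appearing in $\sigma(p,q)$, and similarly on the dual side.
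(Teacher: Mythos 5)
Your argument is correct and is exactly the route the paper intends: the paper omits the proof but states that the enlarged range follows from \cref{thm strichartz} via the Sobolev embedding \eqref{embedding}, as in the rank-one references, and your bookkeeping (projecting a non-admissible $(p,q)$ vertically onto the slanted edge at $(p,q_0)$, then checking that $\sigma_0+d(\tfrac{1}{q_0}-\tfrac{1}{q})=d(\tfrac12-\tfrac1q)-\tfrac1p=\sigma(p,q)$, and dually for the source term) is the standard verification and checks out.
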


\vspace{-0.46cm}
\begin{figure}[!h]
\begin{subfigure}[t]{0.5\textwidth}
\centering
\begin{tikzpicture}[scale=0.5][line cap=round,line join=round,>=triangle 45,x=1.0cm,y=1.0cm]
\draw[->,color=black] (6.,0.) -- (8,0.);
\foreach \x in {,1.,2.,3.,4.,5.,6.,7.,8.,9.,10.,11.}
\draw[shift={(\x,0)},color=black] (0pt,-2pt);
\draw[color=black] (8,0.1) node [anchor=south west] {\large $\frac{1}{p}$};
\draw[->,color=black] (0.,6.) -- (0.,8);
\foreach \y in {,1.,2.,3.,4.,5.,6.,7.}
\draw[shift={(0,\y)},color=black] (-2pt,0pt);
\draw[color=black] (0.1,8.1) node [anchor=west] {\large $\frac{1}{q}$};
\clip(-4,-2) rectangle (11.868487651785372,7.652018714581741);
\fill[line width=2.pt,color=ffqqqq,fill=ffqqqq,fill opacity=0.15000000596046448] (0.45855683542994374,5.928142080369191) -- (5.85410415683891,5.8847171522290775) -- (5.864960388873938,4.136863794589543) -- cycle;
\fill[line width=2.pt,color=qqzzff,fill=qqzzff,fill opacity=0.10000000149011612] (0.13636385492209063,5.773698170331237) -- (5.837359246154291,3.8701454719028483) -- (5.876010062366035,0.10169089125781497) -- (0.16535196708089844,0.12101629936368694) -- cycle;
\draw [line width=1.pt,dash pattern=on 3pt off 3pt,color=ffqqqq] (0.,6.)-- (6.,6.);
\draw [line width=1.pt,color=ffqqqq] (6.,6.)-- (6.,4.);
\draw [line width=1.pt,color=ffqqqq] (6.,4.)-- (0.,6.);
\draw [line width=1.pt,color=qqzzff] (0.,6.)-- (0.,0.);
\draw [line width=1.pt,color=qqzzff] (6.,4.)-- (6.,0.);
\draw [line width=1.pt,dash pattern=on 3pt off 3pt,color=qqzzff] (6.,0.)-- (0.,0.);
\begin{scriptsize}
\draw [fill=ffqqqq] (0.,6.) circle (5pt);
\draw[color=black] (-0.5709441083587729,6) node {\large $\frac{1}{2}$};
\draw [color=ffqqqq] (6.,6.) circle (5pt);
\draw [fill=ffqqqq] (6.,4.) circle (5pt);
\draw [color=qqzzff] (0.,0.) circle (5pt);
\draw[color=black] (-0.5,-0.2) node {\large $0$};
\draw [color=qqzzff] (6.,0.) circle (5pt);
\draw[color=black] (6,-1) node {\large $\frac{1}{2}$};
\draw[color=black] (-1.5,4) node {\large $\frac{1}{2}- \frac{1}{d-1}$};
\end{scriptsize}
\end{tikzpicture}
\vspace{-0.5cm}
\subcaption{Case $d \ge 4$.}
\end{subfigure}
~ 
\begin{subfigure}[t]{0.5\textwidth}
\centering
\begin{tikzpicture}[scale=0.5][line cap=round,line join=round,>=triangle 45,x=1.0cm,y=1.0cm]
\draw[->,color=black] (6.,0.) -- (8,0.);
\foreach \x in {,1.,2.,3.,4.,5.,6.,7.,8.,9.,10.,11.}
\draw[shift={(\x,0)},color=black] (0pt,-2pt);
\draw[color=black] (8,0.1) node [anchor=south west] {\large $\frac{1}{p}$};
\draw[->,color=black] (0.,0.) -- (0.,8);
\foreach \y in {,1.,2.,3.,4.,5.,6.,7.}
\draw[shift={(0,\y)},color=black] (-2pt,0pt);
\draw[color=black] (0.1,8.1) node [anchor=west] {\large $\frac{1}{q}$};
\clip(-4,-2) rectangle (11.868487651785372,7.652018714581741);
\fill[line width=2.pt,color=ffqqqq,fill=ffqqqq,fill opacity=0.15000000596046448] (0.23947996857607806,5.9156932292089195) -- (5.85410415683891,5.8847171522290775) -- (5.896883481046051,0.25828971673893225) -- cycle;
\fill[line width=2.pt,color=qqzzff,fill=qqzzff,fill opacity=0.10000000149011612] (0.08350079729924402,5.793380424802772) -- (0.11455249049619687,0.12106762660967957) -- (5.704457330452569,0.11429587297972885) -- cycle;
\draw [line width=1.pt,dash pattern=on 3pt off 3pt,color=ffqqqq] (0.,6.)-- (6.,6.);
\draw [line width=1.pt,color=ffqqqq] (6.,6.)-- (6.,0.);
\draw [line width=1.pt,color=qqzzff] (0.,6.)-- (0.,0.);
\draw [line width=1.pt,color=red] (0.,6.)-- (6.,0.);
\draw [line width=1.pt,dash pattern=on 3pt off 3pt,color=qqzzff] (0.,0.)-- (6.,0.);
\begin{scriptsize}
\draw [fill=ffqqqq] (0.,6.) circle (5pt);
\draw[color=black] (-0.5709441083587729,6) node {\large $\frac{1}{2}$};
\draw [color=ffqqqq] (6.,6.) circle (5pt);
\draw [color=ffqqqq] (6.,0.) circle (5pt);
\draw [color=qqzzff] (0.,0.) circle (5pt);
\draw[color=black] (-0.5,-0.2) node {\large $0$};
\draw[color=black] (6,-1) node {\large $\frac{1}{2}$};
\end{scriptsize}
\end{tikzpicture}
\vspace{-0.5cm}
\subcaption{Case $d = 3$.}
\end{subfigure}
\end{figure}

\begin{remark}
\cref{thm strichartz} and \cref{corollary strichartz} still hold true in lower dimension $d=3$ with similar proofs. But the endpoint $(\frac{1}{2}, \frac{1}{2}- \frac{1}{d-1} )$ is excluded from the admissible triangle in this case, see  Fig.(B).
\end{remark}

\subsection{Global well-posedness in $L^{p} \left( \mathbb{R}, L^q (X) \right)$}\label{section GWP} 
We refer to \cite{AnPi2014,APV2012} for more detailed proofs of the following well-posedness results. By using the classical fixed point scheme with the previous Strichartz inequality, one obtains the global well-posedness for the semilinear equation
\begin{align}\label{semilinear equation}
\begin{cases}
\partial_{t}^2 u(t,x) - \Delta u(t,x) = F(u(t,x)), \\
u(0,x) =f(x) , \ \partial_{t}|_{t=0} u(t,x) =g(x).
\end{cases}
\end{align}
on $X$ with power-like nonlinearities $F$ satisfying
\begin{align*}
|F(u)| \le C |u|^{\gamma}, \quad |F(u)-F(v)| \le C \left(|u|^{\gamma-1} + |v|^{\gamma -1} \right) |u-v|, \quad \gamma >1.
\end{align*}
and small initial data $f$ and $g$. For every $d \ge 3$, consider the following powers
\begin{align*}
\gamma_1 = 1+ \frac{3}{d}, \ \gamma_2 = 1 + \frac{2}{\frac{d-1}{2}+ \frac{2}{d-1}}, \ \gamma_c = 1+ \frac{4}{d-1}, \\
\gamma_3 = 
\begin{cases}
\frac{\frac{d+6}{2} + \frac{2}{d-1} + \sqrt{4d + \left( \frac{6-d}{2}+\frac{2}{d-1} \right)^2}}{d} \quad &if \ d \le 5, \\
1 + \frac{2}{\frac{d-1}{2} - \frac{1}{d-1}}\quad &if \ d \ge 6,
\end{cases} \\
\gamma_4 = 
\begin{cases}
1 + \frac{4}{d-2} \quad &if \ d \le 5, \\
\frac{d-1}{2} + \frac{3}{d+1} - \sqrt{\left( \frac{d-3}{2} + \frac{3}{d+1} \right)^2 - 4 \frac{d-1}{d+1}}  &if \ d \ge 6,
\end{cases}
\end{align*}
and the following curves
\begin{align*}
\sigma_1(\gamma) = \frac{d+1}{4} - \frac{(d+1)(d+5)}{8d} \frac{1}{\gamma - \frac{d+1}{2d}}, \\
\sigma_2(\gamma) = \frac{d+1}{4} - \frac{1}{\gamma -1}, \quad 
\sigma_3(\gamma) = \frac{d}{2} - \frac{2}{\gamma -1}.
\end{align*}
Denote by $0^{+}$ any small positive constant. In dimension $d \ge 3$, the equation \eqref{semilinear equation} is globally well-posed for small initial data in $H^{\sigma,2}(X) \times H^{\sigma - 1,2}(X)$ provided that
\begin{align*}
\begin{cases}
\sigma = 0^{+}, \ &if \ 1< \gamma \le \gamma_1, \\
\sigma = \sigma_1 (\gamma), \ &if \ \gamma_1< \gamma \le \gamma_2, \\
\sigma = \sigma_2 (\gamma), \ &if \ \gamma_2< \gamma \le \gamma_c, \\
\sigma = \sigma_3 (\gamma), \ &if \ \gamma_c< \gamma \le \gamma_4,
\end{cases}
\end{align*}

Observe that one obtains the same global well-posedness results on noncompact symmetric spaces of arbitrary rank with $G$ complex as on real hyperbolic spaces, without further assumptions.
\section{Further results on locally symmetric spaces}\label{LSS}
Let $\Gamma$ be a discrete torsion-free subgroup of $G$. The locally symmetric space $\Gamma \backslash X$, equipped with the Riemannian structure inherited from $X$ becomes a Riemannian manifold. Consider the Poincaré series
\begin{align*}
P(s;x,y) = \sum_{\gamma \in \Gamma} e^{-s d(x, \gamma y)}, \ s > 0 ,  \ x,y \in X,
\end{align*}
and denote by $\delta(\Gamma) = \inf \lbrace s > 0 \ | \ P(s;x,y) < + \infty \rbrace $ its critical exponent. In \cite{Zha2020}, the author has studied the wave equation and has obtained similar Strichartz inequality and global well-posedness results as in \cref{section applcations}, in the case where $\Gamma \backslash X$ is a rank one locally symmetric space such that $\Gamma$ is convex cocompact and $\delta(\Gamma) < |\rho|$. \\

Recall that,in the rank one setting, $\Gamma$ is called convex cocompact if the quotient group $\Gamma \backslash \conv (\Lambda_{\Gamma})$ is compact, where $\conv (\Lambda_{\Gamma})$ is the convex hull of the limit set $\Lambda_{\Gamma}$ of $\Gamma$. While this notion yields many interesting examples in rank one, it is known to yield a rather limited class in higher rank (see \cite{KlLe2006} and \cite{Qui2005} for more details).\\

However, thanks to our wave kernel estimates \cref{estimate omega 0} and \cref{estimate omega infinity}, we can study along the lines of \cite{Zha2020} the wave equation on higher rank noncompact locally symmetric spaces, under slightly different assumptions:
\begin{enumerate}[leftmargin=1cm]
\item $G$ is complex,
\item $\delta(\Gamma) < |\rho|$,
\item there exists $C > 0$ such that for all $x,y \in X$, $P(s;x,y) \le C P(s; \mathbf{0}, \mathbf{0})$, where $\mathbf{0} =eK$ denotes the origin of $X$.
\end{enumerate}

The first assumption ensures sharp wave kernel estimates on $X$, from which we can deduce wave kernel estimates on $\Gamma \backslash X$. Notice that such information is still lacking for $G$ real. The second assumption plays the same role as in rank one. On the one hand, it ensures that the wave kernel on $\Gamma \backslash X$ is well defined. On the other hand, there is a $L^2$ Kunze-Stein phenomenon under this assumption. In order to get the desired dispersive estimates on $\Gamma \backslash X$, a uniform upper bound of the Poincaré series is required. Notice that we could deduce the last condition $(3)$ from the convex cocompactness of $\Gamma$ in rank one. We refer to \cite{Zha2020} and the references therein for more details about wave type equations on locally symmetric spaces.


\printbibliography

\vspace{15pt}
\address{
    \noindent\textsc{Hong-Wei Zhang:}
    \href{mailto:hongwei.zhang@ugent.be}
    {hongwei.zhang@ugent.be}\\[5pt]
    Institut Denis Poisson,
    Universit\'e d'Orl\'eans, Universit\'e de Tours \& CNRS, 
    Orléans, France\\[5pt]
    Department of Mathematics:
    Analysis, Logic and Discrete Mathematics\\
    Ghent University, Ghent, Belgium
}

\end{document}